\pgfplotsset{compat=1.16,width=.7\textwidth}
\newcommand{\norm}[1]{\lVert #1 \rVert}
\newcommand{\R}{\ensuremath{\mathbb{R}}}
\newcommand{\C}{\ensuremath{\mathbb{C}}}
\newcommand{\W}{\ensuremath{\mathbb{W}}}
\newcommand{\rat}{\mathcal{Q}}
\newcommand{\pol}{\ensuremath{\mathbb{P}}}
\newtheorem{lemma}{Lemma}[section]
\newtheorem{theorem}[lemma]{Theorem}
\newtheorem{corollary}[lemma]{Corollary}
\theoremstyle{remark}
\newtheorem{remark}[lemma]{Remark}
\theoremstyle{definition}
\newtheorem{definition}[lemma]{Definition}
\renewcommand{\vec}[1]{\mathbf{#1}}		
\author{
    Angelo A. Casulli\thanks{
        Scuola Normale Superiore, Pisa, Italy
        (\texttt{angelo.casulli@sns.it}).
    } 
    \and
    Leonardo Robol\thanks{
        Dipartimento di Matematica, Università di Pisa
        (\texttt{leonardo.robol@unipi.it}).
    }
}
\title{An efficient block rational Krylov solver for Sylvester equations with 
  adaptive pole selection}
\begin{document}
    \maketitle 

    \begin{abstract}
        We present an algorithm for the solution of Sylvester equations 
        with right-hand side of low rank. The method is based on projection
        onto a block rational Krylov subspace, with two key contributions 
        with respect to the state-of-the-art. First, we show how to maintain 
        the last pole equal to infinity throughout the iteration, by means 
        of pole reordering, which allows for a cheap evaluation of the 
        true residual at every step. Second, we extend the convergence 
        analysis in [Beckermann B., \emph{An error analysis for rational {G}alerkin projection applied to the
        {S}ylvester equation}, SINUM, 2011] to the block case. 
        This extension allows us to link the convergence with the problem of minimizing 
        the norm of a small rational matrix over the spectra or field-of-values 
        of the involved matrices. This is in contrast with the non-block case, 
        where the minimum problem is scalar, instead of 
        matrix-valued. Replacing the norm of the objective function 
        with a more easily evaluated
        function yields several adaptive pole selection strategies, providing 
        a theoretical analysis for known heuristics, as well as effective novel 
        techniques. 
    \end{abstract}

    \section{Introduction}

    We are concerned with the solution of Sylvester equations of the form 
    \begin{equation}\label{eqn:Sylvester}
        AX - XB = \vec u \vec v^H, \qquad \vec u \in \mathbb C^{n \times b}, \ 
        \vec v \in \mathbb C^{m \times b}, 
    \end{equation}
    and $A,B$ are square matrices of sizes $n \times n$ and $m \times m$, 
    respectively. The matrices $\vec u, \vec v$ are block vectors, i.e., 
    matrices with a few columns, with $b \ll n,m$.
    If $A$ and $B$ have disjoint spectra, the solution is unique 
    and can be expressed in the integral form
        \begin{equation}\label{thm:solution-sylv}
            X=\frac{1}{2\pi i}\int_{\gamma}(zI_n-A)^{-1}\vec u \vec v^H(zI_m-B)^{-1} dz
        \end{equation}
    where $\gamma$ is a compact contour that encloses once, 
    in positive orientation, 
    the eigenvalues of $A$, but not the eigenvalues of $B$
     \cite{lancaster1970explicit}. 
     
    Sylvester equations arise often in control theory \cite{antoulas2005approximation,benner2015survey}, and in the 
    solution of 2D PDEs on tensorized domains \cite{palitta2016matrix,townsend2015automatic}. In 
    this setting the matrices involved are often of large size, and exploiting 
    the low-rank structure in the right-hand side is essential. For problems 
    arising from control theory, the rank is linked with the number of 
    inputs and outputs in the system, so $b$ is typically moderate and related to the analysis of MIMO systems \cite{antoulas2005approximation}. 
    For PDEs, 
    the low-rank property holds in an approximate sense and is related 
    to the regularity of the problem under consideration. 
    
    When the spectra 
    of $A$ and $B$ are well-separated, one can show that the matrix $X$ that solves \eqref{eqn:Sylvester}
    has exponentially decaying singular values \cite{beckermann2017singular}, 
    and can be approximated as a low-rank matrix \cite{simoncini2016computational}.
    If $X$ is close to a low-rank matrix, i.e., 
    we can write it as $X = U Y V^H + E$ where $U,V$ 
    are matrices with a few orthogonal columns, and $E$ is a 
    small error, then the Sylvester equation 
    can be approximately solved by computing the exact solution 
    of the projected 
    equation $(U^H A U) Y - Y (V^H B V) = U^H \vec u \vec v^H V$. This is 
    the core idea of projection methods. The main difficulty is identifying 
    good bases $U,V$ to use for projection the equation. 

    A common choice is to take $U$ and $V$ as orthonormal basis of Krylov or rational Krylov subspaces. 
    When $\vec u, \vec v$ are vectors, these subspaces contain a basis 
    for $f(A) \vec u$ or $f(B^H) \vec v$, where $f(z)$ is a low degree 
    polynomial or rational function with assigned poles. Increasing the
    degree produces a sequence of subspaces, one contained in the other, 
    and therefore a sequence of approximations. The characterization 
    through polynomials and rational functions allow us to link 
    the convergence of the method with a polynomial (resp. rational) 
    approximation problem, which allows us to state explicit results (at 
    least in the case of normal matrix coefficients) 
    \cite{beckermann2011error,beckermann2009error}. 
    The rational methods are inherently 
    more complex to analyze because a choice of poles is involved, and the 
    convergence is dependent on the quality of these poles. 

    When $\vec u$ and $\vec v$ are block vectors an analogous construction 
    can be made, by building a basis for the column spans of 
    $f(A) \vec u$ or $f(B^H) \vec v$. The results in the literature 
    focus mostly on the non-block case, and are more scarce for this 
    setting. One of the contributions of this work is to extend 
    the convergence analysis for rational Krylov
    found in \cite{beckermann2011error} to this more 
    general setting. This is done by exploiting the notation
    for characteristic matrix 
    polynomial used in \cite{lund2018new} to analyze various block 
    polynomial Krylov methods. 

    If $X = UYV^H$ with $U,V$ bases of a Krylov subspace of order 
    $\ell$, then the residual 
    $AX - XB - C$ belongs to the Krylov subspace of order $\ell + 1$ \cite{simoncini2016computational}.
    This property
    can be exploited to compute the residual error almost for free 
    at each step. For 
    rational Krylov subspaces, the analogous result tells us that the residual 
    belongs to a larger subspace obtained by adding an infinity pole. However, 
    if infinity poles are periodically injected in the space, 
    we may incur in an artificial inflation of 
    the size of the projected problem. In this work, we show how one can exploit
    the theory of block rational Arnoldi decomposition (BRAD) from \cite{elsworth2020block}
    and the reordering of the poles in the subspaces to maintain a single 
    infinity pole in the definition of the rational block Krylov subspace, 
    precisely with the aim of checking the residual.

    Then, the convergence analysis introduced by extending the results 
    in \cite{beckermann2011error} is used 
    to design an adaptive-pole-selection algorithm. Since the objective 
    function is now matrix-valued, instead of scalar,  
    the problem is much richer. In particular, the minimization 
    of its norm is numerically challenging, and it 
    is natural to replace the objective function with a simpler surrogate. 
    We present various options, and we show that one of these leads to the 
    same heuristic proposed by Druskin and Simoncini in 
    \cite{druskin2011adaptive}
    generalizing the rank $1$ case. Hence, our theory provides a theoretical 
    analysis to the convergence of this choice. Then, we show that other 
    choices for the surrogate function are possible; 
    in particular, we provide an adaptive technique of pole selection
    that slightly improves the one proposed in \cite{druskin2011adaptive}. 

    The paper is structured as follows. In Section~\ref{sec:notation}
    we introduce the notation used in the paper, and then in Section~\ref{sec:matrix-poly-rat-functions}
    we discuss the tools needed from the theory of matrix polynomials and rational functions. 
    Section~\ref{sec:Rat-krylov} is devoted to the introduction of 
    rational block Krylov subspaces and the related theory, and 
    Section~\ref{sec:Krylov-Sylvester} presents the Algorithm based 
    on projection on these subspaces for the solution of Sylvester equations.
    Section~\ref{sec:residual-and-pole-selection} discusses the convergence and the 
    adaptive pole selection. Finally, we present some numerical tests
    in Section~\ref{sec:numerical}.


    \section{Notation} \label{sec:notation} Given a matrix $A$ we denote by
    $\Lambda(A)$ its spectrum, by $\W(A)$ its field of values and by $\sigma(A)$
    the set of its singular values. We use $\bar{A}$ and $A^H$ to denote the
    conjugate and the conjugate transpose of $A$, respectively. For any
    polynomial $Q(z)$ we use $\bar{Q}(z)$ to denote the polynomial that has as
    coefficients the conjugate of the coefficients of $Q(z)$. The identity
    matrix of size $s$ is denoted by $I_s$. We use bold letters to indicate
    block vectors, that is, tall and skinny matrices. The size of blocks is
    denoted by $b$.  The Frobenius norm and the two norm are denoted by
    $\norm{\cdot}_F$ and $\norm{\cdot}_2$, respectively. We employ a Matlab-like
    notation for submatrices, for instance, given $A\in \C^{m\times n}$ the
    matrix $A_{i_1:i_2,j_1:j_2}$ is the submatrix obtained selecting only rows from
    $i_1$ to $i_2$ and columns from $j_1$ to $j_2$ (extrema included). To simplify
    the notation we use bold letters also to denote block indices, that is, we use $\vec s$ to denote the set of indices $b(s-1)+1:bs$. We use the symbols
    $\otimes$  and $\oplus$ to denote the Kronecker product and the Kronecker
    sum respectively, and the symbol $\text{vec}$ to denote the operator that
    transforms a matrix into a vector obtained by stacking the columns of the
    matrix on top of one another. We
    denote by $\vec e_i$ the block vector defined as $e_i\otimes I_b$, where $e_i$ is the $i$th element of the canonical basis.

    \section{Matrix polynomials and rational functions}
    \label{sec:matrix-poly-rat-functions} In this section, we provide some
    definitions and properties about matrix polynomials that we use in the
    paper. Matrix polynomials can be equivalently interpreted as 
    polynomials with a scalar variable and matrix coefficients or 
    as a matrix with polynomial entries. Both interpretations can 
    be useful for proving different results. Formally, 
    we will denote by $\pol(\C^{b\times b})$ the space of $b \times b$ 
    matrix polynomials, with coefficients in
    $\C^{b \times b}.$ We use the notation 
    $\pol_d(\C^{b\times b})$ to denote the set of matrix
    polynomials of degree less or
    equal than $d$. A matrix polynomial is said to be monic if its leading
    coefficient is equal to the identity. 

    We will use the notation $P(z)=\sum_{i=0}^d z^i \Gamma_i$ 
    to indicate a generic matrix polynomial of degree less than 
    $d$ with matrix coefficients $\Gamma_i\in \C^{b\times b}$. 
    In order to analyze (block) Krylov methods, we associate a matrix 
    polynomials with a linear operator that acts on block 
    vectors. More precisely, we define an operator 
    $\circ$ as a function from
    $\C^{n\times n}\times\C^{n\times b}$ to $\C^{n\times b}$ as follows: 
    given
    two matrices $A\in \C^{n\times n}$ and $\vec v\in \C^{n\times b},$ we set
    \begin{equation*} 
    P(A)\circ \vec v :=\sum_{i=0}^d A^i\vec v\Gamma_i.
    \end{equation*} 
    This notation has already been used  
    in \cite{kent1989chebyshev,simoncini1996ritz,simoncini1996convergence}, 
    and has been exploited 
    in \cite{lund2018new} for the analysis of block Krylov subspaces.     
    If the matrix $A$ is fixed, the map $\vec v \mapsto P(A) \circ \vec v$ is a
    function  from $\C^{n\times b}$ to $\C^{n\times b}$. When dealing 
    with rational Krylov method, it will often be useful to 
    apply the inverse of the operator, that is given 
    a generic vector $\vec v$ finding another 
    block vector $\vec w$ such that $P(A) \circ \vec w = \vec v$. 
    Since the operator is linear in $\vec w$, this is equivalent to 
    solving a linear system. A formal definition can be given 
    as follows. 
    
    \begin{definition} \label{def:pinv}
        Given a matrix $A\in \C^{n\times n}$, a block vector $\vec v\in \C^{n\times b}$ and a matrix polynomial $P(z)=\sum_{i=0}^d z^i \Gamma_i\in\pol(\C^{b\times b})$, such that $det(P(\lambda))\neq 0$ for each $\lambda$ eigenvalue of $A$, we define $P(A)\circ^{-1}\vec v$ as the block vector $\vec w\in \C^{n\times b},$ such that $P(A)\circ \vec w= \vec v.$
    \end{definition}

    Since $\vec w$ is implicitly defined as the solution of a 
    linear system, we shall check that the system is invertible 
    to ensure that the definition is well posed. 

    \begin{lemma}
        Given a matrix $A \in \mathbb C^{n \times n}$, 
        a block vector $\vec v \in \mathbb C^{n \times b}$, and 
        a matrix polynomial $P(\lambda)$ as above such that 
        $\det(P(\lambda)) \neq 0$ for $\lambda \in \Lambda(A)$, there 
        is a unique $\vec w \in \mathbb C^{n \times b}$ 
        verifying $P(A) \circ \vec w = \vec v$. 
    \end{lemma}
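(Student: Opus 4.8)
The plan is to rewrite the equation $P(A) \circ \vec w = \vec v$ as a genuine linear system by applying the $\vect$ operator, and then argue invertibility of the resulting coefficient matrix. Writing $P(z) = \sum_{i=0}^d z^i \Gamma_i$, the definition of $\circ$ gives $P(A) \circ \vec w = \sum_{i=0}^d A^i \vec w \Gamma_i$. Using the standard identity $\vect(MYN) = (N^T \otimes M)\vect(Y)$, we obtain
\begin{equation*}
\vect\bigl(P(A) \circ \vec w\bigr) = \Bigl( \sum_{i=0}^d \Gamma_i^T \otimes A^i \Bigr) \vect(\vec w).
\end{equation*}
So the claim reduces to showing that the $nb \times nb$ matrix $\mathcal{P} := \sum_{i=0}^d \Gamma_i^T \otimes A^i$ is nonsingular under the hypothesis $\det(P(\lambda)) \neq 0$ for all $\lambda \in \Lambda(A)$.

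The key step is to compute the spectrum of $\mathcal{P}$ in terms of the spectra of $A$ and of the matrix polynomial $P$. I would bring $A$ to Schur form, $A = Q T Q^H$ with $T$ upper triangular and $\Lambda(A) = \{\lambda_1, \dots, \lambda_n\}$ on the diagonal; then $A^i = Q T^i Q^H$ with $T^i$ upper triangular having diagonal entries $\lambda_k^i$. Conjugating $\mathcal{P}$ by $I_b \otimes Q$ (up to the transpose bookkeeping on the first factor, which one handles by also using a Schur-type or just triangular form, or simply by noting the first tensor factors $\Gamma_i^T$ are fixed matrices) turns $\mathcal{P}$ into a block matrix that is block upper triangular with respect to the $T$-induced ordering, whose diagonal $b \times b$ blocks are exactly $\sum_{i=0}^d \lambda_k^i \Gamma_i^T = P(\lambda_k)^T$ for $k = 1, \dots, n$. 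Hence $\det \mathcal{P} = \prod_{k=1}^n \det\bigl(P(\lambda_k)^T\bigr) = \prod_{k=1}^n \det P(\lambda_k)$, which is nonzero precisely by hypothesis. Therefore $\mathcal{P}$ is invertible, $\vect(\vec w) = \mathcal{P}^{-1}\vect(\vec v)$ exists and is unique, and unstacking gives the unique $\vec w$.

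The main obstacle — really the only point requiring care — is the triangularization argument: one must be careful that conjugating by $I_b \otimes Q$ acts on the \emph{second} tensor factor (the $A^i$), while the $\Gamma_i^T$ sit in the first factor and are not simultaneously triangularizable in general. The clean fix is to order the Kronecker product the other way, i.e.\ work with $\sum_i A^i \otimes \Gamma_i^T$ after a permutation similarity (which does not affect invertibility), so that $Q$ triangularizes the outer factor; then the matrix is genuinely block upper triangular in the ordinary sense, with the diagonal blocks $P(\lambda_k)^T$ as claimed. Alternatively, and perhaps most cleanly, one can avoid Schur form entirely: observe that $\circ$-application is continuous in $A$ and that invertibility is an open/Zariski-generic condition, prove the determinant formula first for diagonalizable $A$ (where $A = S D S^{-1}$ diagonalizes $\mathcal{P}$ into a block-diagonal matrix with blocks $P(\lambda_k)^T$ outright), and then extend by density — but since we want an exact statement for all $A$ with the separation hypothesis, the triangular argument with the correct tensor ordering is the most direct and I would present that.
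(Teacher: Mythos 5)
Your proposal is correct and follows essentially the same route as the paper: vectorize the equation, reduce to the invertibility of $\sum_i \Gamma_i^T \otimes A^i$, pass to the Schur form of $A$, and swap the Kronecker factors by a permutation similarity (the perfect shuffle) to obtain a block triangular matrix with diagonal blocks $P(\lambda_k)^T$. The tensor-ordering subtlety you flag is exactly the point the paper handles with the shuffle permutation, so nothing is missing.
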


    \begin{proof}
    The relation $P(A)\circ \vec w=\vec v$ can be rewritten as
    $\text{vec}(P(A)\circ \vec w)=\text{vec}(\vec v)$, in
    addition, we note that
    \begin{equation*}
        \text{vec}(P(A)\circ \vec w)=\left(\sum_{i=0}^d 
          \Gamma_i^T\otimes A^{i}\right)  \text{vec} (\vec w),
    \end{equation*}
    where $\otimes$ denotes the Kronecker product, 
    and we used the standard Kronecker 
    relation $\text{vec}(AXB) = (B^T \otimes A) \text{vec}(X)$. We now 
    prove that the matrix $\sum_{i=0}^d \Gamma_i^T\otimes A^{i}$ is invertible, 
    which implies the sought claim, since $\vec w$ can be defined as 
    \begin{equation*}
        \vec w=\text{vec}^{-1}\left(\left(\sum_{i=0}^d \Gamma_i^T\otimes A^{i}\right)^{-1}\text{vec} (\vec v)\right).
    \end{equation*}
    Let $A=UTU^H$ be a Schur decomposition of $A$, with $T$ upper
    triangular, then
    \begin{equation*}
        \sum_{i=0}^d \Gamma_i^T\otimes A^{i}=\left(I_b\otimes U\right)\left(\sum_{i=0}^d \Gamma_i^T\otimes T^{i}\right)\left(I_b\otimes U^{H}\right).
    \end{equation*}
    There exists a permutation
    matrix $P\in \C^{n b\times n b}$ (the ``perfect shuffle'', see \cite{golub2013matrix}), such that
    \begin{equation*}
        \sum_{i=0}^d \Gamma_i^T\otimes T^{i}=P\left(\sum_{i=0}^d T^{i}\otimes \Gamma_i^T\right)P^H.
    \end{equation*}
    Hence, it is sufficient to prove the invertibility of $\sum_{i=0}^d
    T^{i}\otimes \Gamma_i^T$ that is a block triangular matrix with block
    diagonal matrices given by $P(\lambda_1)^T,\dots, P(\lambda_n)^T$, where
    $\lambda_i$ are the eigenvalues of $A$. Therefore, the assumption 
    $\det(P(\lambda))\neq 0$ for each $\lambda$ eigenvalue of $A$ yields the 
    claim. 
    \end{proof}
    \begin{remark}
        The proof of well-posedness 
        of Definition~\ref{def:pinv} also gives us an explicit representation of $P(A)\circ^{-1}$: for any $\vec v\in \C^{n\times b}$
        \begin{equation*}
            P(A)\circ^{-1}\vec v=\text{vec}^{-1}\left(\left(\sum_{i=0}^d \Gamma_i^T\otimes A^{i}\right)^{-1}\text{vec} (\vec v)\right).
        \end{equation*} 
        In particular, the hypothesis $\det(P(\lambda)) \neq 0$ for $\lambda \in \Lambda(A)$ is necessary to guarantee the invertibiliy of $\sum_{i=0}^d \Gamma_i^T\otimes A^{i}$.
    \end{remark}

    The previous definitions and results essentially deal with 
    matrix polynomials; for rational Krylov methods, we will need a 
    way to incorporate rational functions into the picture. In practice, 
    it will be sufficient to consider objects of the form 
    $Q(\lambda)^{-1} P(\lambda)$, where $Q(\lambda)$ is a scalar 
    polynomial, and $P(\lambda)$ a matrix polynomial. It is immediate 
    to check that any rational matrix (i.e., a matrix with 
    rational entries) can be always written in this form.

    The following remark suggests a way to extend
    the operators
    $\circ$ and $\circ^{-1}$ to rational matrix polynomials with scalar
    denominator.

    \begin{lemma} \label{rmk:product-scalar-polynomial}
        Let $P(z)=\sum_{i=0}^d\Gamma_i z^i\in \pol_d(\C^{b\times b})$ and let $Q(z)\in \pol_k(\C)$ be a scalar polynomial. Denoting by $\tilde{P}(z)=Q(z)P(z)=\sum_{i=0}^{d+k}\Delta_iz^i$, it holds
        \begin{equation*}
            Q(A)\cdot (P(A)\circ \vec v) =\tilde P(A)\circ \vec v \quad \text{ and }\quad Q(A)^{-1}\cdot (P(A)\circ^{-1} \vec v) =\tilde P(A)\circ^{-1}\vec v, 
        \end{equation*}
        where in the second equality we assume $\det(\tilde{P}(\lambda))\neq 0$ for each $\lambda \in \Lambda(A)$.
    \end{lemma}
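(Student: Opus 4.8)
The plan is to verify both identities directly from the definitions, working with the operator $\circ$ through its explicit formula $P(A)\circ\vec v=\sum_{i=0}^d A^i\vec v\,\Gamma_i$ and translating everything into the vectorized picture used in the proof of the previous lemma, where $\circ$ becomes multiplication by $M_P:=\sum_{i=0}^d\Gamma_i^T\otimes A^i$.

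First I would establish the polynomial identity $\tilde P(z)=Q(z)P(z)$ at the level of coefficients. Writing $Q(z)=\sum_{j=0}^k q_j z^j$ (with scalar $q_j$), the coefficients of $\tilde P$ are $\Delta_\ell=\sum_{i+j=\ell} q_j\Gamma_i$, i.e. a Cauchy-type convolution. Then for the first identity I would compute
\[
Q(A)\cdot(P(A)\circ\vec v)=\Bigl(\sum_{j=0}^k q_j A^j\Bigr)\Bigl(\sum_{i=0}^d A^i\vec v\,\Gamma_i\Bigr)=\sum_{i,j} q_j A^{i+j}\vec v\,\Gamma_i=\sum_{\ell=0}^{d+k}A^\ell\vec v\Bigl(\sum_{i+j=\ell}q_j\Gamma_i\Bigr)=\tilde P(A)\circ\vec v,
\]
where the key point is that $q_j$ is a scalar, so it commutes past the matrix factor $A^i\vec v$ and can be absorbed into the coefficient $\Gamma_i$; this is precisely why the denominator is required to be a scalar polynomial. (Equivalently: $Q(A)\otimes$ acting on the vectorization is $I_b\otimes Q(A)$, and $(I_b\otimes Q(A))\,M_P=\sum_{i,j}q_j\,\Gamma_i^T\otimes A^{i+j}=M_{\tilde P}$, using that $I_b\otimes Q(A)$ commutes with every $\Gamma_i^T\otimes A^i$ because $Q(A)$ is a polynomial in $A$.)

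For the second identity I would argue by applying the first one to a suitable vector. Under the hypothesis $\det(\tilde P(\lambda))\neq 0$ for $\lambda\in\Lambda(A)$, the previous lemma guarantees $\tilde P(A)\circ^{-1}$ is well defined; moreover $\det(\tilde P(\lambda))=Q(\lambda)^b\det(P(\lambda))$, so this hypothesis also forces $Q(\lambda)\neq 0$ and $\det(P(\lambda))\neq 0$ for every eigenvalue $\lambda$, hence $Q(A)$ is invertible and $P(A)\circ^{-1}$ is well defined too. Set $\vec w:=P(A)\circ^{-1}(Q(A)^{-1}\vec v)$, which makes sense by the preceding remarks; then by the first identity $\tilde P(A)\circ\vec w=Q(A)\cdot(P(A)\circ\vec w)=Q(A)\cdot Q(A)^{-1}\vec v=\vec v$, and by uniqueness in Definition~\ref{def:pinv} this means $\vec w=\tilde P(A)\circ^{-1}\vec v$. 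Finally, $Q(A)^{-1}\cdot(P(A)\circ^{-1}\vec v)$ equals $\vec w$ as well: applying the first identity to $P(A)\circ^{-1}\vec v$ gives $\tilde P(A)\circ(Q(A)^{-1}\cdot(P(A)\circ^{-1}\vec v))=Q(A)\cdot(P(A)\circ(P(A)\circ^{-1}\vec v))=Q(A)\cdot Q(A)^{-1}\cdot\text{(something)}$—more cleanly, $P(A)\circ(Q(A)^{-1}\cdot P(A)\circ^{-1}\vec v)=Q(A)^{-1}\cdot(P(A)\circ(P(A)\circ^{-1}\vec v))=Q(A)^{-1}\vec v$ since scalars commute with $\circ$, so $Q(A)^{-1}\cdot P(A)\circ^{-1}\vec v=P(A)\circ^{-1}(Q(A)^{-1}\vec v)=\vec w$, as claimed.

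There is no real obstacle here; the only thing to be careful about is the bookkeeping of where scalars versus matrices sit relative to the $\circ$ action, and the observation that the well-posedness hypothesis on $\tilde P$ automatically supplies the separate invertibility of $Q(A)$ and of $P(A)\circ$ needed to even state the second equality. I would present the vectorized computation as the main line of argument since it makes the commutation $I_b\otimes Q(A)$ with $M_P$ transparent, and relegate the index-chasing convolution identity to a one-line remark.
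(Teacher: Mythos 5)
Your proposal is correct and follows essentially the same route as the paper: a direct coefficient computation for the first identity (you do the full convolution at once where the paper factors $Q$ into linear terms, a cosmetic difference), and for the second identity the same reduction to the first via uniqueness of the $\circ^{-1}$ preimage and the commutation of $Q(A)^{\pm1}$ with powers of $A$. Your added observation that $\det(\tilde P(\lambda))=Q(\lambda)^b\det(P(\lambda))$ guarantees the separate well-posedness of $Q(A)^{-1}$ and $P(A)\circ^{-1}$ is a nice touch the paper leaves implicit.
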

    \begin{proof}
        To derive the first equality it is sufficient to prove the case of
        $Q(z)=z-\alpha$ for $\alpha \in \C$, since we can factor
        $Q(z)$ as the product of linear terms. By definition of
        $\tilde P(z)$, 
        \begin{equation*}
            \tilde P(z)= (z-\alpha)P(z)= \sum_{i=0}^{d+1} (\Gamma_{i-i}-\alpha \Gamma_{i})z^i,
        \end{equation*}
        with the convention that $\Gamma_{-1}=\Gamma_{d+1}=0.$ In particular
        $\Delta_i=\Gamma_{i-1}-\alpha\Gamma_i$. Hence,
        \begin{align*}
            \tilde P(A)\circ \vec v =& \sum_{i=0}^{d+1}A^i\vec v\Delta_i
            =\sum_{i=0}^{d}A^{i+1}\vec v\Gamma_i -\alpha \sum_{i=0}^{d}A^{i}\vec v\Gamma_i\\
            =&A\cdot P(A)\circ \vec v - \alpha P(A)\circ \vec v=(A-\alpha I_n)\cdot (P(A)\circ\vec v)=Q(A)\cdot (P(A)\circ \vec v).
        \end{align*}

        For the second identity it is sufficient to prove that $\tilde P(A)\circ
        (Q(A)^{-1}\vec w ) =\vec v$, where $\vec w=P(A)\circ ^{-1}\vec v$. Using
        the first identity, 
        \begin{equation*}
            \tilde P(A)\circ (Q(A)^{-1}\vec w )= Q(A)\cdot (P(A)\circ (Q(A)^{-1}\vec w))=Q(A)\sum_{i=0}^dA^iQ(A)^{-1}\vec w\Gamma_i.
        \end{equation*}
        Since $Q(A)$ commutes with the powers of $A$, this can be reduced to
        \begin{equation*}
            \tilde P(A)\circ (Q(A)^{-1}\vec w )=P(A)\circ\vec w.
        \end{equation*}
        By definition of $\vec w$ it follows that 
        $P(A)\circ\vec w=\vec v$, that concludes the proof.
    \end{proof}

    In view of the previous result, we can extend 
    the action of a matrix polynomial 
    $P(A) \circ \vec v$ to the case of rational matrices with 
    prescribed poles.
    
    \begin{definition}
        Let $Q(z)\in \pol(\C)$ and let 
        $R(z)\in \pol(\C^{b\times b})/Q(z)$, that is there 
        exists $P(z)\in \pol(\C^{b\times b})$ such 
        that $R(z)=P(z)/Q(z)$. Given $A\in\C^{n\times n}$
        such that $Q(A)$ is invertible 
        and $\vec v\in \C^{n\times b}$, we define
        \begin{equation*}
            R(A) \circ \vec v = Q(A)^{-1} 
            \left(  P(A)\circ \vec v \right) 
            \quad \text{and}
            \quad 
            R(A) \circ^{-1} \vec v = 
            Q(A) \left(  P(A)\circ^{-1} \vec v \right).
        \end{equation*}
    \end{definition}
    The expression of a rational matrix in the form 
    $R(z) = P(z) / Q(z)$ is not unique; however the previous definition does not depend on the representation, indeed if $R(z)=P(z) / Q(z)=\tilde{P}(z)/\tilde{Q}(z)$, then $Q(z)\tilde{P}(z)=\tilde{Q}(z)P(z),$ hence by Lemma~\ref{rmk:product-scalar-polynomial}, 
    \begin{equation}\label{eqn:first_equality}
        Q(A)\cdot (\tilde{P}(A)\circ \vec v) =\tilde{Q}(A)\cdot( P(A)\circ \vec v )
    \end{equation}
    and
    \begin{equation}\label{eqn:second_equality} Q(A)^{-1}\cdot (\tilde{P}(A)\circ^{-1} \vec v) =\tilde{Q}(A)^{-1}\cdot( P(A)\circ^{-1}\vec v).
    \end{equation} 
    Multiplying both the sides of \eqref{eqn:first_equality} on the left by $Q(A)^{-1}\cdot \tilde Q(A)^{-1}$ we obtain the well-posedness of the map $\vec v \mapsto R(A) \circ \vec v$, and multiplying both sides of \eqref{eqn:second_equality} on the left by $Q(A)\tilde Q(A)$ we have the well-posedness of the map $\vec v \mapsto R(A) \circ^{-1} \vec v$. 
    
    \begin{remark}
        If the matrix $A$ is fixed, both operators    
        \begin{equation*}
            R(A) \circ: \C^{n\times d} \rightarrow \C^{n \times d} \quad \text{ and }\quad R(A) \circ^{-1}: \C^{n\times d} \rightarrow \C^{n \times d} 
        \end{equation*}        
        are linear. As in the polynomial case, 
        the latter is only defined if $R(z)$ is 
        nonsingular over all the eigenvalues of $A$. 
    \end{remark}

    
    \begin{lemma}\label{rmk:commutativity}
        If $A,B\in \C^{n\times n}$ commute, then for every rational matrix $R(z)=P(z)/Q(z),$ where  $P(z)\in\pol(\C^{b\times b})$ and $Q(z)\in \pol(\C)$,
        \begin{equation*}
            B\cdot R(A) \circ \vec v =R(A) \circ (B\vec v),
        \end{equation*}
        moreover, if $det(P(\lambda))\neq 0$ for each $\lambda \in \Lambda(A),$
        \begin{equation*} B\cdot R(A) \circ^{-1} \vec v =R(A) \circ^{-1} (B\vec v).
        \end{equation*}
    \end{lemma}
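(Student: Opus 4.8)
The plan is to reduce everything to the definition of the $\circ$ operator and then to the corresponding statement for matrix polynomials, exploiting that a commuting pair $A,B$ also makes $B$ commute with every polynomial (and rational function) of $A$. First I would handle the polynomial case: for $P(z)=\sum_{i=0}^d z^i\Gamma_i$, we have by definition $P(A)\circ\vec v=\sum_{i=0}^d A^i\vec v\,\Gamma_i$, so
\begin{equation*}
  B\cdot(P(A)\circ\vec v)=\sum_{i=0}^d B A^i\vec v\,\Gamma_i
  =\sum_{i=0}^d A^i B\vec v\,\Gamma_i=P(A)\circ(B\vec v),
\end{equation*}
where the middle equality uses $BA^i=A^iB$, which follows from $AB=BA$ by induction on $i$. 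This is the only place commutativity enters, and it is routine.

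Next I would pass to the rational case $R(z)=P(z)/Q(z)$. By definition $R(A)\circ\vec v=Q(A)^{-1}(P(A)\circ\vec v)$. Since $Q$ is a scalar polynomial and $B$ commutes with $A$, $B$ commutes with $Q(A)$, hence with $Q(A)^{-1}$ (if $XY=YX$ and $X$ is invertible then $X^{-1}Y=YX^{-1}$). Therefore
\begin{equation*}
  B\cdot R(A)\circ\vec v=B\,Q(A)^{-1}(P(A)\circ\vec v)
  =Q(A)^{-1}B\,(P(A)\circ\vec v)
  =Q(A)^{-1}(P(A)\circ(B\vec v))=R(A)\circ(B\vec v),
\end{equation*}
using the polynomial case in the third step. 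For the inverse operator, recall $R(A)\circ^{-1}\vec v=Q(A)(P(A)\circ^{-1}\vec v)$, and $P(A)\circ^{-1}\vec v$ is by Definition~\ref{def:pinv} the unique $\vec w$ with $P(A)\circ\vec w=\vec v$ (well-posed precisely because $\det(P(\lambda))\neq 0$ on $\Lambda(A)$). Applying $B$ to $P(A)\circ\vec w=\vec v$ and using the polynomial identity gives $P(A)\circ(B\vec w)=B\vec v$, so by uniqueness $P(A)\circ^{-1}(B\vec v)=B\vec w=B\,(P(A)\circ^{-1}\vec v)$. Multiplying by $Q(A)$ and commuting $B$ past $Q(A)$ once more yields $B\cdot R(A)\circ^{-1}\vec v=R(A)\circ^{-1}(B\vec v)$.

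There is no real obstacle here; the statement is essentially bookkeeping on top of the elementary fact that commuting matrices stay commuting under polynomial and rational functional calculus. The one point to be slightly careful about is the role of the hypothesis $\det(P(\lambda))\neq 0$ for $\lambda\in\Lambda(A)$: it is needed only to ensure $\circ^{-1}$ is defined, and the uniqueness part of the lemma preceding Definition~\ref{def:pinv} is exactly what lets us conclude $P(A)\circ^{-1}(B\vec v)=B(P(A)\circ^{-1}\vec v)$ from the two vectors solving the same linear system. Everything else is linearity and the commutation $BA=AB$.
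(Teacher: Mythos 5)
Your proof is correct. The first identity is handled exactly as in the paper: write out $R(A)\circ\vec v=Q(A)^{-1}\sum_i A^i\vec v\,\Gamma_i$ and slide $B$ past $Q(A)^{-1}$ and past each $A^i$. For the second identity you diverge slightly from the paper's route: the authors invoke the explicit Kronecker representation $\mathrm{vec}(R(A)\circ^{-1}\vec v)=(I_b\otimes Q(A))\bigl(\sum_i\Gamma_i^T\otimes A^i\bigr)^{-1}\mathrm{vec}(\vec v)$ and commute $I_b\otimes B$ through the factors, whereas you work with the implicit characterization of $P(A)\circ^{-1}\vec v$ as the unique solution of $P(A)\circ\vec w=\vec v$, apply $B$ to both sides, and conclude by uniqueness. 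The two arguments rest on the same commutation fact, but yours is marginally more self-contained, avoiding the vec/Kronecker machinery, and it makes transparent that the hypothesis $\det(P(\lambda))\neq 0$ on $\Lambda(A)$ enters only to guarantee that $\circ^{-1}$ is well defined. Both are valid; no gap.
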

    \begin{proof} Let $P(z)=\sum_{i=1}^dz^i\Gamma_i\in \pol_d(\C^{b\times b})$ and $Q(z)\in \pol(\C)$, such that $R(z)=P(z)/Q(z)$. Then
        \begin{equation*}
            B\cdot R(A)\circ \vec v=BQ(A)^{-1}\sum_{i=1}^dA^i\vec v\Gamma_i=Q(A)^{-1}\sum_{i=1}^dA^iB\vec v\Gamma_i=R(A)\circ (B\vec v),
        \end{equation*}
        and therefore
        \begin{align*}
            &\text{vec}(B\cdot R(A)\circ^{-1} \vec v)=(I_b\otimes B)(I_b\otimes Q(A))\left(\sum_{i=1}^d\Gamma_i^T\otimes A^i\right)^{-1}\text{vec}(\vec v)\\
            =&(I_b\otimes Q(A))\left(\sum_{i=1}^d\Gamma_i^T\otimes A^i\right)^{-1} (I_b\otimes B) \text{vec}(\vec v)=\text{vec}(R(A)\circ^{-1} (B\vec v)).   
        \end{align*}
    \end{proof}
    
    Given a matrix polynomial $P(z)=\sum_{i=0}^dz^i\Gamma_i$, we denote by
    $P^H(z)$ the matrix polynomial $P^H(z): = \sum_{i=0}^dz^i\Gamma_i^H$. 
    Similarly, we denote by $\bar{P}(z)$ the matrix polynomial 
    with complex conjugate (but not transposed) coefficients. 
    Given a function
    $R(z)=P(z)/Q(z)$, we denote by $\bar{R}(z)$ and $R^H(z)$ the rational
    functions $\bar{P}(z)/\bar{Q}(z)$ and $P^H(z)/\bar{Q}(z)$, respectively. 
    
    \begin{lemma}{\label{rmk:commutativity2}}
        Given $\vec v\in\C^{n\times b}$ and $\vec w\in\C^{m\times b}$, the 
        following identities hold:
        \begin{align*}
            R(zI_n)\circ^{-1}\vec v&=\vec v(R(z))^{-1}
            &
            R(zI_n)\circ^{-1}\vec v\vec w^H&=\vec v(R^H(\bar{z}I_m)\circ^{-1}\vec w)^H.   
        \end{align*}
    \end{lemma}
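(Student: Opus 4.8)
The plan is to reduce both identities to ordinary matrix algebra by unwinding the definition of $R(A)\circ^{-1}$, exploiting the fact that evaluating the operator $\circ$ at the \emph{scalar} matrix $zI_n$ collapses to right multiplication by a fixed $b\times b$ matrix.

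First I would observe that for any $P(z)=\sum_{i=0}^d z^i\Gamma_i\in\pol_d(\C^{b\times b})$ and any $\vec w\in\C^{n\times b}$,
\begin{equation*}
  P(zI_n)\circ\vec w=\sum_{i=0}^d(zI_n)^i\vec w\,\Gamma_i=\vec w\sum_{i=0}^d z^i\Gamma_i=\vec w\,P(z),
\end{equation*}
since each $(zI_n)^i=z^iI_n$ is scalar and commutes past $\vec w$. Hence, when $P(z)$ is nonsingular, the block vector solving $P(zI_n)\circ\vec w=\vec v$ is $\vec w=\vec v\,P(z)^{-1}$, i.e.\ $P(zI_n)\circ^{-1}\vec v=\vec v\,P(z)^{-1}$ (the same conclusion drops out of the explicit Kronecker formula in the Remark after Definition~\ref{def:pinv}, using $\sum_i\Gamma_i^T\otimes(zI_n)^i=P(z)^T\otimes I_n$). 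Writing $R(z)=P(z)/Q(z)$ with $Q$ scalar and recalling $R(z)^{-1}=Q(z)P(z)^{-1}$, the definition of $R(A)\circ^{-1}$ then gives
\begin{equation*}
  R(zI_n)\circ^{-1}\vec v=Q(zI_n)\bigl(P(zI_n)\circ^{-1}\vec v\bigr)=Q(z)\,\vec v\,P(z)^{-1}=\vec v\,R(z)^{-1},
\end{equation*}
which is the first identity.

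For the second identity I would record the conjugation rule $R^H(\bar z)=R(z)^H$: since $P^H(\bar z)=\sum_i\bar z^{\,i}\Gamma_i^H=P(z)^H$ and $\bar Q(\bar z)=\overline{Q(z)}$, we get $R^H(\bar z)=P^H(\bar z)/\bar Q(\bar z)=P(z)^H/\overline{Q(z)}=\bigl(Q(z)^{-1}P(z)\bigr)^H=R(z)^H$, and therefore $\bigl(R^H(\bar z)\bigr)^H=R(z)$. Applying the first identity with the data $(n,z,R)$ and then with the data $(m,\bar z,R^H)$, and reading $R(zI_n)\circ^{-1}$ on $\vec v\,\vec w^H$ as acting on the $n\times b$ left factor (the constant matrix $\vec w^H$ passing through unchanged),
\begin{align*}
  R(zI_n)\circ^{-1}\vec v\,\vec w^H&=\vec v\,R(z)^{-1}\vec w^H,\\
  \vec v\bigl(R^H(\bar zI_m)\circ^{-1}\vec w\bigr)^H&=\vec v\bigl(\vec w\,(R^H(\bar z))^{-1}\bigr)^H=\vec v\,\bigl((R^H(\bar z))^H\bigr)^{-1}\vec w^H=\vec v\,R(z)^{-1}\vec w^H,
\end{align*}
so the two expressions coincide.

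The argument involves no genuine difficulty beyond careful bookkeeping once the first identity is in hand. The main thing to get right is tracking which factors are transposed versus merely conjugated when passing between $R$, $\bar R$, and $R^H$ — this is exactly where the rule $R^H(\bar z)=R(z)^H$ does all the work — together with interpreting $R(zI_n)\circ^{-1}$ applied to the product $\vec v\,\vec w^H$ as an operation on its $n\times b$ left factor. All well-posedness hypotheses collapse to the single requirement $\det P(z)\ne 0$, equivalently $R(z)$ invertible (and then $\det P^H(\bar z)=\overline{\det P(z)}\ne0$ as well), which is precisely the standing assumption needed for $R(zI_n)\circ^{-1}$ and $R^H(\bar zI_m)\circ^{-1}$ to be defined in the first place.
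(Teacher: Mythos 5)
Your proof is correct and follows essentially the same route as the paper: the first identity comes from observing that evaluating $\circ$ at the scalar matrix $zI_n$ reduces to right multiplication by $P(z)$ (the paper phrases this via the Kronecker/vec formula, which you also note), and the second identity is then exactly the paper's chain $\vec v R(z)^{-1}\vec w^H=\vec v(\vec w(R^H(\bar z))^{-1})^H$ using $R^H(\bar z)=R(z)^H$. No gaps.
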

    \begin{proof} Let $P(z)=\sum_{i=1}^dz^i\Gamma_i\in \pol_d(\C^{b\times b})$ and $Q(z)\in \pol(\C)$, such that $R(z)=P(z)/Q(z)$. It holds
        \begin{align*}
            \text{vec}\left(R(zI_n)\circ^{-1}\vec v\right)=&Q(z)\left(\sum_{i=0}^d\Gamma^T_i\otimes z^i I_n\right)^{-1} \text{vec}(\vec v)\\
             =&\left(\left(R^T(z)\right)^{-1}\otimes I_n\right) \text{vec}(\vec v)= \text{vec}\left(\vec{v}(R(z))^{-1}\right),
        \end{align*}
        from which follows the first equality. For the second identity notice that
        \begin{equation*}
            R(zI_n)\circ^{-1}\vec v\vec w^H=\vec v(R(z))^{-1}\vec w^H=\vec v(\vec w(R^H(\bar{z}))^{-1})^H=\vec v(R^H(\bar{z}I_m)\circ^{-1}\vec w)^H.
        \end{equation*}
    \end{proof}
    
    The following theorem is a generalization of the Cauchy integral formula
    to the action of rational matrices. 
    
    \begin{theorem}\label{thm:generalized-cauchy} Let $A\in\C^{n \times n}$,
        $\vec v\in\C^{n\times b}$ and let $\gamma$ be a compact contour that
        encloses once the eigenvalues of $A$ with positive orientation. Then,
        for any $R(z)\in \pol(\C^{b\times b})/Q(z)$, such that $\det(R(z))\neq
        0$ for each $z$ in the compact set enclosed by $\gamma$, it holds
        \begin{equation*}
            \frac{1}{2 \pi i} \int_{\gamma}R(zI_n)\circ^{-1}
            \left[ 
                (zI_n-A)^{-1}\vec v
            \right] \, dz=R(A)\circ^{-1}\vec v.
        \end{equation*}
    
        \begin{proof}
            Let $P(z)=\sum_{i=1}^dz^i\Gamma_i,$ be such that $R(z)=P(z)/Q(z)$. Then
            \begin{align*}
                &\text{vec}\left(\int_{\gamma}R(zI_n)\circ^{-1}
                  \left[ (zI_n-A)^{-1}\vec v  \right] \, dz\right)\\
                =&\left(\int_{\gamma}Q(z)\left(\sum_{i=0}^d\Gamma_i^T\otimes z^iI_n\right)^{-1}\cdot\left(I_n\otimes(zI_n-A)^{-1}\, \right)dz\right)\text{vec}(\vec v)\\
                =&\left(\int_{\gamma}Q(z)\left(\sum_{i=0}^d\Gamma_i^Tz^{i}\right)^{-1}\otimes (zI_n-A)^{-1}\, dz\right)\text{vec}(\vec v).
            \end{align*}
            For each $s,t\in \{1,\dots,b\}$, let $f_{s,t}(z)$ be the 
            function that maps $z$ in the entry in position $(s,t)$ of 
            $Q(z)\left(\sum_{i=0}^d\Gamma_i^Tz^{i}\right)^{-1}$. Since for each 
            $z$ inside the compact set bounded by $\gamma$ it holds $\det(R(z))\neq 0$, the functions $f_{s,t}(z)$, are holomorphic on such set. Then for the Cauchy integral formula, we have
            \begin{equation*}
                \frac{1}{2\pi i}\int_{\gamma}Q(z)\left(\sum_{i=0}^d\Gamma_i^Tz^{i}\right)^{-1}_{s,t}\otimes (zI_n-A)^{-1}\, dz
                =\frac{1}{2\pi i}\int_{\gamma}f_{s,t}(z)\cdot (zI_n-A)^{-1}\, dz
                =f_{s,t}(A).
            \end{equation*}
            Then, if we denote by $F\in \C^{nb\times nb}$ the block matrix for which the block in position $(s,t)$ is defined by $f_{s,t}(A)$, we have the equivalence 
            \begin{equation*}
                F=\frac{1}{2\pi i}\int_{\gamma}Q(z)\left(\sum_{i=0}^d\Gamma_i^Tz^{i}\right)^{-1}\otimes (zI_n-A)^{-1}\, dz.
            \end{equation*}
            We now claim that 
            $
                F=(I_b\otimes Q(A))\left(\sum_{i=0}^d\Gamma_i^T\otimes A^{i}\right)^{-1},
            $
            which implies the sought results, since
            \begin{align*}
                \text{vec}\left(\frac{1}{2\pi i}\int_{\gamma}R(zI_n)\circ^{-1}(zI_n-A)^{-1}\vec v \, dz\right)=F\cdot\text{vec}(\vec v)&\\
                =(I_b\otimes Q(A))\left(\sum_{i=0}^d\Gamma_i^T\otimes A^{i}\right)^{-1}\text{vec}(\vec v)&=\text{vec}\left(R(A)\circ^{-1}\vec v\right).
            \end{align*}
    
            Hence in the following we prove that $\left(\sum_{i=0}^d\Gamma_i^T\otimes A^{i}\right)\cdot F=I_{b}\otimes Q(A)$.
    
            For any $s,t\in \{1,\dots ,b\}$, let us define $g_{s,t}(z)=\left(\sum_{i=0}^d\Gamma_i^Tz^i\right)_{s,t}$. Since 
            \begin{equation*}
                \left(\sum_{i=0}^d\Gamma_i^Tz^i\right)\cdot\left[Q(z) \left(\sum_{i=0}^d\Gamma_i^Tz^i\right)^{-1}\right]=Q(z)I_b,            
            \end{equation*} it holds
            \begin{equation}\label{eqn:kron-delta}
                Q(z)\delta_{s,t}=\sum_{r=1}^bg_{s,r}(z)f_{r,t}(z),
            \end{equation}
            where $\delta_{s,t}$ denotes the Kronecker delta.
    
            To simplify the notation, for any integer $r\in\{1,\dots, b\}$, we define $ix(r)$ as the set of indices $n(r-1)+1:n r$. For any $s,t\in \{1,\dots ,b\}$ we have 
            \begin{align*}
                \left(\left(\sum_{i=0}^d\Gamma_i^T\otimes A^{i}\right)\cdot F\right)_{ix(s),ix(t)}=&\sum_{r=1}^b\left(\sum_{i=0}^d(\Gamma_i^T)_{s,r}\cdot A^{i}\right)f(A)_{r,t}=\sum_{r=1}^b g_{s,r}(A)f_{r,t}(A)=\delta_{s,t}Q(A),
            \end{align*}
            where the last equality follows from \eqref{eqn:kron-delta}.
    
        \end{proof}
    
    \end{theorem}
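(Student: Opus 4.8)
The plan is to vectorize the whole identity and reduce it to the ordinary Cauchy integral formula for functions of the single matrix $A$. Write $R(z)=P(z)/Q(z)$ with $P(z)=\sum_{i=0}^{d}z^{i}\Gamma_{i}$. First, by \cref{rmk:commutativity2} the integrand rewrites as $R(zI_{n})\circ^{-1}\bigl[(zI_{n}-A)^{-1}\vec v\bigr]=(zI_{n}-A)^{-1}\vec v\,R(z)^{-1}$, so applying $\vect$ together with the Kronecker identity $\vect(MXN)=(N^{T}\otimes M)\vect(X)$ gives
\[
 \vect\Bigl(R(zI_{n})\circ^{-1}\bigl[(zI_{n}-A)^{-1}\vec v\bigr]\Bigr)=\Bigl(Q(z)\bigl(\sum_{i=0}^{d}\Gamma_{i}^{T}z^{i}\bigr)^{-1}\otimes(zI_{n}-A)^{-1}\Bigr)\vect(\vec v),
\]
since $P(z)^{T}=\sum_{i}\Gamma_{i}^{T}z^{i}$ and therefore $(R(z)^{T})^{-1}=Q(z)(\sum_{i}\Gamma_{i}^{T}z^{i})^{-1}$. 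On the other hand, the explicit representation of $P(A)\circ^{-1}$ recorded in the Remark after \cref{def:pinv}, combined with the definition of $R(A)\circ^{-1}$, gives $\vect(R(A)\circ^{-1}\vec v)=(I_{b}\otimes Q(A))(\sum_{i}\Gamma_{i}^{T}\otimes A^{i})^{-1}\vect(\vec v)$. Since $\vec v$ is arbitrary and integration commutes with the linear map $\vect$, it then suffices to prove the $nb\times nb$ matrix identity
\[
 \frac{1}{2\pi i}\int_{\gamma}Q(z)\Bigl(\sum_{i=0}^{d}\Gamma_{i}^{T}z^{i}\Bigr)^{-1}\otimes(zI_{n}-A)^{-1}\,dz=(I_{b}\otimes Q(A))\Bigl(\sum_{i=0}^{d}\Gamma_{i}^{T}\otimes A^{i}\Bigr)^{-1}.
\]

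Next I would evaluate the left-hand side block by block. Let $f_{s,t}(z)$ be the $(s,t)$ entry of the $b\times b$ rational matrix $Q(z)(\sum_{i}\Gamma_{i}^{T}z^{i})^{-1}=(R(z)^{T})^{-1}$; its only possible poles are the zeros of $\det(\sum_{i}\Gamma_{i}^{T}z^{i})=\det P(z)$, and since $\det R(z)=\det P(z)/Q(z)^{b}$ is nonzero on the compact set enclosed by $\gamma$ --- the factor $Q(z)$ multiplying the adjugate, together with the factor $Q(z)^{b-1}$ carried by that adjugate, cancelling the $Q(z)^{b}$ in the denominator --- each $f_{s,t}$ is holomorphic on a neighbourhood of that set, which contains $\Lambda(A)$. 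The scalar Cauchy integral representation of a matrix function then yields $\frac{1}{2\pi i}\int_{\gamma}f_{s,t}(z)(zI_{n}-A)^{-1}\,dz=f_{s,t}(A)$, so the left-hand side equals the $nb\times nb$ block matrix $F$ whose $(s,t)$ block is $f_{s,t}(A)$.

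It then remains to identify $F$, that is, to show $(\sum_{i}\Gamma_{i}^{T}\otimes A^{i})F=I_{b}\otimes Q(A)$. Putting $g_{s,r}(z)=(\sum_{i}\Gamma_{i}^{T}z^{i})_{s,r}$, the relation $(\sum_{i}\Gamma_{i}^{T}z^{i})\bigl(Q(z)(\sum_{i}\Gamma_{i}^{T}z^{i})^{-1}\bigr)=Q(z)I_{b}$ reads entrywise as $\sum_{r=1}^{b}g_{s,r}(z)f_{r,t}(z)=Q(z)\delta_{s,t}$; laying out the block indices exactly as in the perfect-shuffle argument used in the well-posedness proof for \cref{def:pinv}, the $(s,t)$ block of $(\sum_{i}\Gamma_{i}^{T}\otimes A^{i})F$ is $\sum_{r=1}^{b}g_{s,r}(A)f_{r,t}(A)$, and I would finish by invoking that $g_{s,r}(A)$ and $f_{r,t}(A)$, being functions of the same matrix $A$, commute, and that the functional calculus is multiplicative, so $\sum_{r}g_{s,r}(A)f_{r,t}(A)=\bigl(\sum_{r}g_{s,r}f_{r,t}\bigr)(A)=(Q\delta_{s,t})(A)=\delta_{s,t}Q(A)$, which is the $(s,t)$ block of $I_{b}\otimes Q(A)$. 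I expect this last step to be the main obstacle: passing from the scalar identity in the indeterminate $z$ to the matrix identity at $A$ is delicate because $f_{r,t}$ is genuinely rational (it involves $Q(A)^{-1}$), so one must work inside a commutative algebra of rational functions of $A$ with poles off $\Lambda(A)$ on which the holomorphic functional calculus acts as a ring homomorphism; once this framework is in place the computation is mechanical. A secondary point to verify with care is the holomorphy of the $f_{s,t}$ on the region enclosed by $\gamma$ --- i.e.\ that the assumption $\det R(z)\neq 0$ there, together with $R$ being a genuine finite rational matrix on that region, kills every pole of the adjugate-over-determinant expression, including the cancellation of the $Q$ factors when $b>1$ --- after which the perfect-shuffle bookkeeping is routine.
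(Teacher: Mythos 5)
Your proposal is correct and follows essentially the same route as the paper: vectorize via the Kronecker identity, reduce to the block matrix $F$ with blocks $f_{s,t}(A)$ obtained from the scalar Cauchy integral formula, and identify $F$ as $(I_b\otimes Q(A))(\sum_i\Gamma_i^T\otimes A^i)^{-1}$ through the entrywise relation $\sum_r g_{s,r}(z)f_{r,t}(z)=Q(z)\delta_{s,t}$. The two points you flag as delicate (holomorphy of the $f_{s,t}$ and multiplicativity of the holomorphic functional calculus on rational functions with poles off $\Lambda(A)$) are exactly the steps the paper relies on, stated there with less explicit justification than you give.
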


    Let us now recall the concept of divisibility for matrix polynomials and the
    definition of block characteristic polynomial.
    We use the term \emph{regular} to identify matrix polynomials whose determinant is not identically zero over $\mathbb C$. 
    The following results,
    including proofs of theorems, can be found in
    \cite[Section~2.5]{lund2018new} or 
    in the more classical reference \cite[Section~7.7]{gohberg2005matrix}.

    The results extend the familiar concept of Euclidean division 
    to matrix polynomials. Matrix polynomials form a 
    non-commutative ring, so we need to 
    differentiate between left and right divisors. However, the underlying 
    idea of dividing $P(z)$ by $D(z)$ is still the same: we want to 
    write $P(z)$ as a multiple of $D(z)$ plus an additional remainder 
    term, which should be of lower degree than $D(z)$. 
    
    \begin{definition}
        Let $P(z),K(z),R(z)$ and $D(z)$ be matrix polynomials, 
        where $P(z)$ has degree $d$, $D(z)$ is regular with degree 
        less than $d$, and $R(z)$ has degree less than $\deg D(z)$. 
        $K(z)$ is defined as ``left quotient'' 
        and $R(z)$ as the ``left remainder'' of $P(z)$ 
        divided by $D(z)$ if
            \begin{equation*} 
            P(z)=D(z)K(z)+R(z).
            \end{equation*} 
        If $R(z) = 0$, we say that $P(z)$ is left divisible by $D(z)$.    
    \end{definition}

    A natural question arises: given $P(z)$ and a lower degree polynomial 
    $D(z)$, can we easily check if $D(z)$ divides $P(z)$ (i.e., if the 
    remainder of the left or right division is zero)?

    For a scalar polynomial $p(\lambda)$ and 
    a linear divisor $\lambda - s$, this amounts 
    to check if $p(s) = 0$. A similar result holds for 
    matrix polynomials as well. 
    
    \begin{theorem}{\cite[Theorem~2.17]{lund2018new}}\label{thm:poly-division}
        The matrix polynomial $P(z)\in\pol(\C^{b\times b})$ is left divisible
        by $zI_b-S$, where $S\in \C^{b\times b}$ if and only if $P(S)=0$.
    \end{theorem}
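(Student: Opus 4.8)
The plan is to prove both implications at once through a single ``remainder formula'': for every $P(z)=\sum_{i=0}^d z^i\Gamma_i\in\pol(\C^{b\times b})$ and every $S\in\C^{b\times b}$,
\begin{equation*}
    P(z)=(zI_b-S)K(z)+P(S),\qquad K(z):=\sum_{i=1}^d\Bigl(\sum_{k=0}^{i-1}z^{\,i-1-k}S^k\Bigr)\Gamma_i,
\end{equation*}
where $P(S):=\sum_{i=0}^d S^i\Gamma_i$ denotes the value obtained by substituting $z\to S$ in $P(z)$, keeping the coefficients $\Gamma_i$ on the right. To establish this, I would start from the scalar--matrix factorization $z^iI_b-S^i=(zI_b-S)\bigl(z^{i-1}I_b+z^{i-2}S+\dots+S^{i-1}\bigr)$, which is legitimate precisely because the scalar indeterminate $z$ commutes with $S$; multiplying on the right by $\Gamma_i$ and summing over $i=0,\dots,d$ (the $i=0$ term vanishing) gives $P(z)-P(S)=(zI_b-S)K(z)$, with $K(z)$ the matrix polynomial displayed above, of degree at most $d-1$. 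Since $P(S)$ is constant and $\deg(zI_b-S)=1$, this identity is exactly the left Euclidean division of $P(z)$ by the regular, monic polynomial $zI_b-S$, with left quotient $K(z)$ and left remainder $P(S)$. (The formula also covers the degenerate low-degree cases, with $K\equiv 0$ when $d=0$.)

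The two directions then follow immediately. If $P(S)=0$, the formula reads $P(z)=(zI_b-S)K(z)$, so $P(z)$ is left divisible by $zI_b-S$. Conversely, suppose $P(z)=(zI_b-S)\tilde K(z)$ for some matrix polynomial $\tilde K(z)$; subtracting the remainder formula yields $(zI_b-S)\bigl(\tilde K(z)-K(z)\bigr)=P(S)$, whose right-hand side is constant, whereas, if $\tilde K(z)\neq K(z)$, the left-hand side has degree at least one because $zI_b-S$ is monic of degree one --- a contradiction. Hence $\tilde K=K$ and $P(S)=0$. Equivalently, one may simply invoke the uniqueness of left quotient and remainder in the division by $zI_b-S$, which holds since its leading coefficient $I_b$ is invertible.

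I expect the only delicate point to be handling non-commutativity cleanly: one must resist ``plugging $z=S$'' directly into a factored product $(zI_b-S)K(z)$, since evaluation at a matrix argument is not multiplicative for matrix polynomials in general. The remainder formula sidesteps this entirely, because every manipulation needed to derive it uses only that the scalar $z$ commutes with $S$, and no coefficient $\Gamma_i$ is ever moved past a matrix factor. Everything else --- reindexing the double sum that defines $K(z)$ and the degree bookkeeping --- is routine.
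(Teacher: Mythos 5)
Your proof is correct: the remainder identity $P(z)=(zI_b-S)K(z)+P(S)$ is established cleanly without ever ``substituting'' $z=S$ into a product, and both implications follow as you describe. The paper itself gives no proof of this statement --- it is quoted from \cite[Theorem~2.17]{lund2018new} (see also \cite[Section~7.7]{gohberg2005matrix}) --- and your argument is precisely the standard generalized B\'ezout remainder theorem used in those references, so there is nothing further to reconcile.
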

    
    \begin{definition}
        Let $P(z)$ be a matrix polynomial. A matrix $S\in \C^{b\times b}$ is called a left solvent of $P(z)$ if $P(S)=0$.
    \end{definition}
    
    In the following, we omit ``left'' when referring to quotients, 
    divisibility and solvents.

    We remark that solvents are important tools in the analysis
    of matrix polynomials. 
    They  can be used to compute a part of the 
    spectrum \cite{lancaster2012hermitian}, 
    and are closely related to the solution of one-sided 
    matrix equation that arises, for instance, in 
    some Markov chains (see \cite{bini2005numerical} and the 
    references therein). 

    We now present a possible way to construct a block characteristic 
    polynomial. In the scalar case, we may think of building 
    the characteristic polynomial of a matrix $A$ by computing 
    its eigenvalues $s_1, \ldots, s_n$, and then taking the product of 
    the linear factors $p(\lambda) = (\lambda - s_1) \ldots (\lambda - s_n)$. 
    The next theorem presents the extension of this idea
    to the block case, where the eigenvalues are replaced 
    by blocks in a block diagonal matrix similar to the original one, 
    and solvents play the role of the roots. 
    
    
    \begin{definition}\label{def:charact_poly}
        Let $A\in C^{d b \times d b}$ and $\vec v\in \C^{d b\times b}.$ A block characteristic polynomial of $A$ with respect to $\vec v$ is a matrix polynomial $P(z)\in \pol_d(\C^{b\times b})$ such that 
        \begin{equation*}
            P(A)\circ \vec v=0.
        \end{equation*}
    \end{definition}
    
    \begin{theorem}\cite[Theorem~2.24]{lund2018new} \label{thm:blk-char-poly}Let $A\in C^{d b \times d b}$ and $\vec v\in \C^{d b\times b}.$ Let $P(z)$ be a monic block characteristic polynomial of $A$ with respect to $\vec v$. Assuming that there exists a block diagonal matrix 
    \begin{equation*}
        T=\begin{bmatrix}
            \Theta_1\\&\ddots\\&&\Theta_d
        \end{bmatrix},
    \end{equation*}
    with $\{\Theta_i\}_{i=1:d}\subseteq \C^{b\times b}$
        and an invertible matrix $\mathcal{U}\in \C^{db\times db}$ such that 
    \begin{equation*}
        A=\mathcal{U}T\mathcal{U}^{-1}, 
    \end{equation*}
    and letting $W=[W_1,\dots,W_d]^T=\mathcal {U}^{-1}\vec v$, with $\{W_i\}_{i=1}^d\subseteq \C^{b\times b}$, then if $W_i$ is invertible for each $i$, it holds that
    \begin{enumerate}
        \item $S_i=W_i^{-1}\Theta_iW_i$ are solvents of P(z);
        \item if $S_i-S_j$ is nonsingular for each $i\neq j$ then
        \begin{equation*}
            P(z)=(zI_b-S_1)\dots \cdot (zI_b-S_d).
        \end{equation*}
    \end{enumerate}
    \end{theorem}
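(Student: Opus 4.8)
The plan is to exploit the similarity $A = \mathcal{U} T \mathcal{U}^{-1}$ to translate statements about $P(A)\circ\vec v$ into block-diagonal statements. First I would observe that, for any matrix polynomial $P(z) = \sum_i z^i \Gamma_i$, one has $P(A)\circ\vec v = \sum_i A^i \vec v \Gamma_i = \mathcal{U}\sum_i T^i (\mathcal{U}^{-1}\vec v)\Gamma_i = \mathcal{U}\,(P(T)\circ W)$, so that $P(A)\circ\vec v = 0$ is equivalent to $P(T)\circ W = 0$. Because $T$ is block diagonal, $T^i$ is the block diagonal matrix with blocks $\Theta_k^i$, and hence $P(T)\circ W$ is the block column whose $k$-th block is $\sum_i \Theta_k^i W_k \Gamma_i$. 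Setting $W_i$ invertible, I would insert $I_b = W_k W_k^{-1}$ in the natural place: $\sum_i \Theta_k^i W_k \Gamma_i = W_k \sum_i (W_k^{-1}\Theta_k W_k)^i \Gamma_i = W_k\,(P(S_k)\circ I_b) = W_k\, P(S_k)$, using $\Theta_k^i = W_k S_k^i W_k^{-1}$ and the fact that acting on $I_b$ with $\circ$ just evaluates the matrix polynomial at the argument. So $P(A)\circ\vec v = 0$ forces $W_k P(S_k) = 0$ for every $k$, and invertibility of $W_k$ gives $P(S_k) = 0$, i.e.\ each $S_k$ is a solvent of $P(z)$. That proves item~1.

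For item~2, the idea is that the $d$ solvents $S_1,\dots,S_d$ of the degree-$d$ monic matrix polynomial $P(z)$, when pairwise ``separated'' ($S_i - S_j$ nonsingular for $i\neq j$), completely determine $P(z)$ as the product of linear factors in the stated order. I would argue by induction on $d$. By Theorem~\ref{thm:poly-division}, since $S_d$ is a solvent, $P(z)$ is left divisible by $zI_b - S_d$: write $P(z) = K(z)(zI_b - S_d)$ — actually I must be careful about the side: Theorem~\ref{thm:poly-division} as stated gives left divisibility $P(z) = (zI_b - S)K(z)$ when $P(S)=0$, so I would use that form, peeling factors from the \emph{left}. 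Thus $P(z) = (zI_b - S_1)\tilde{P}(z)$ with $\tilde{P}(z)$ monic of degree $d-1$. Then for each $j \geq 2$, evaluating at $S_j$ gives $0 = P(S_j) = (S_j - S_1)\tilde{P}(S_j)$, and since $S_j - S_1$ is nonsingular, $\tilde{P}(S_j) = 0$, so $S_2,\dots,S_d$ are solvents of $\tilde{P}(z)$, which again satisfy the separation hypothesis. The induction hypothesis then yields $\tilde{P}(z) = (zI_b - S_2)\cdots(zI_b - S_d)$, and we are done.

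The main obstacle I anticipate is \textbf{bookkeeping on the side of division and the order of the factors}: Theorem~\ref{thm:blk-char-poly} states the factorization as $(zI_b - S_1)\cdots(zI_b - S_d)$ in increasing index order, while the similarity data naturally attaches $S_k$ to the $k$-th diagonal block with no a priori ordering, and the non-commutativity means the order genuinely matters. I would need to check that the left-division convention in Theorem~\ref{thm:poly-division} is consistent with peeling off $(zI_b - S_1)$ first (rather than $(zI_b - S_d)$), and that the evaluation identity ``$P(S) = 0 \iff P(z) = (zI_b-S)K(z)$'' is the left version — if the reference's convention is the right-divisibility one, I would instead peel factors from the right starting with $S_d$, or transpose the argument accordingly. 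A secondary subtlety is that evaluating the \emph{product} $(zI_b - S_1)\tilde P(z)$ at $z = S_j$ requires the substitution rule for products of matrix polynomials, which only behaves well because the left-most factor is linear; I would invoke the standard fact (again from \cite[Section~2.5]{lund2018new} or \cite[Section~7.7]{gohberg2005matrix}) that $\big((zI_b - S)K(z)\big)\big|_{z=S} = 0$ regardless of $K$, which is exactly the content behind Theorem~\ref{thm:poly-division}. Everything else is routine linear algebra.
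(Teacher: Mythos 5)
The paper itself does not prove this theorem (it is imported from \cite[Theorem~2.24]{lund2018new}), so your argument can only be judged on its own terms. Your proof of item~1 is correct and is the standard one: conjugating by $\mathcal U$ turns $P(A)\circ\vec v=0$ into a block-diagonal identity whose $k$-th block is $W_kP(S_k)$, and invertibility of $W_k$ gives $P(S_k)=0$. You also correctly identify that the evaluation convention forced by $\circ$ is $P(S)=\sum_i S^i\Gamma_i$, which is the one matched to \emph{left} division in Theorem~\ref{thm:poly-division}, so the leftmost factor is the one you may peel off.

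The induction for item~2, however, breaks at the step ``$0=P(S_j)=(S_j-S_1)\tilde P(S_j)$''. Evaluation of matrix polynomials at matrix arguments is not multiplicative: if $\tilde P(z)=\sum_i z^i\Delta_i$ and $P(z)=(zI_b-S_1)\tilde P(z)$, then $\Gamma_i=\Delta_{i-1}-S_1\Delta_i$ and hence $P(X)=X\tilde P(X)-\sum_i X^iS_1\Delta_i$, whereas $(X-S_1)\tilde P(X)=X\tilde P(X)-\sum_i S_1X^i\Delta_i$; the two differ by $\sum_i(S_1X^i-X^iS_1)\Delta_i$, which vanishes at $X=S_j$ only if $S_j$ commutes with $S_1$. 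The generalized B\'ezout fact you invoke gives vanishing only at $X=S_1$ itself; it is not a product rule at other arguments. The gap is not cosmetic. For $d=2$, the two conditions $P(S_1)=P(S_2)=0$ with $P(X)=X^2+X\Gamma_1+\Gamma_0$ force $\Gamma_1=-(S_1-S_2)^{-1}(S_1^2-S_2^2)$, and peeling off $(zI_b-S_1)$ leaves the factor $zI_b-(S_2-S_1)^{-1}S_2(S_2-S_1)$: the second solvent appears only up to a similarity by the difference $S_2-S_1$, and collapses to $S_2$ exactly when $S_1$ and $S_2$ commute. Consequently, the data your induction actually uses --- ``each $S_i$ is a solvent of $P$ and the pairwise differences are invertible'' --- cannot by itself deliver the ordered factorization with the \emph{untransformed} $S_i$; this is the classical solvent-chain phenomenon of Dennis--Traub--Weber and Gohberg--Lancaster--Rodman, where the later linear factors carry conjugates of the $S_j$ by products of differences. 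A correct argument must therefore bring in more of the structure of the block diagonalization (or restate the factorization with the transformed solvents), rather than relying on the product-evaluation identity.
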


\section{Block rational Krylov methods}\label{sec:Rat-krylov}
    
Given a matrix $A\in \C^{n \times n}$, a block vector $\vec v\in \C^{n\times b}$
and a sequence of poles $\boldsymbol{\xi}_{k}=\{\xi_j\}_{j=0}^{k-1}\subseteq \C\cup
\{\infty\}\setminus \Lambda(A)$ the $k$th block rational Krylov space is defined
as
\begin{equation*}  
\rat_k(A,\vec v, \boldsymbol{\xi}_k) = \left\{ R(A) \circ \vec v : R(z) = \frac{P(z)}{Q_{k}(z)}, \text{with } P(z)\in \pol_{k-1}(\C^{b\times b})\right\},
\end{equation*}
where $Q_{k}(z) = \prod_{\xi_j\in\boldsymbol{\xi}_k, \xi_j\neq \infty}(z - \xi_j)$. For
simplicity, we sometimes denote such space by $\rat_k(A,\vec v)$ omitting poles.
Note that when choosing all poles equal to $\infty$ we recover the classical
definition of block rational Krylov subspaces.

It can be proved that $\rat_k(A,\vec v)\subseteq \rat_{k+1}(A,\vec v)$. In this
work, we will assume that the block rational Krylov subspaces are always
strictly nested, that is $\rat_k(A,\vec v)\subsetneq \rat_{k+1}(A,\vec v)$ and
that the dimension of $\rat_k(A,\vec v)$ is equal to $k b$. 

An orthonormal block basis of $\rat_{k}(A,\vec v)$ (for simplicity, we will often
just say ``orthonormal basis'') is defined as a matrix $V_k=[\vec v_1,\dots,
\vec v_k]\in \C^{n\times bk}$ with orthonormal columns, such that every block
vector $\vec v\in\rat_k(A,b)$ can be written as $\vec v=\sum_{i=1}^k\vec v_i
\Gamma_i$, for $\Gamma_i\in \C^{b\times b}.$

Krylov methods require the computation of the block 
orthogonal basis 
and the corresponding projection of the matrix $A$. 
If an orthogonal basis $V_{k+1}$is known, than the projected 
matrix is given by $A_{k+1}=V_{k+1}^HAV_{k+1}$. 

The matrix $V_{k+1}$ can be computed by a block rational 
Arnoldi Algorithm\footnote{ For
simplicity we describe a version of the algorithm that does not allows poles
equal to zero. For a more complete version of the algorithm we refer to
\cite{elsworth2020block}.}
\refeq{algorithm:block-Arnoldi}, that iteratively computes the block columns of
$V_{k+1}$ and two matrices $\underline{K}_k, \underline{H}_k\in\C^{b(k+1) \times
bk}$ in block upper Hessenberg form such that 
\begin{equation}\label{eqn:rad0}
AV_{k+1}\underline{K_k}=V_{k+1}\underline{H_k}.
\end{equation}

\begin{algorithm}
    \begin{algorithmic}
	\Require{$A \in \C^{n \times n}, \vec v \in \C^{n\times b},\boldsymbol {\xi}_{k+1}= \{\xi_0, \dots, \xi_{k}\}$}
	\Ensure{ $V_{k+1}\in \C^{n\times b(k+1)},$ $\underline{H}_k,  \underline{K}_k\in \C^{b(k+1)\times bk}$}

    \State $\vec w \gets (I-A/\xi_0)^{-1}\vec v$ \Comment{with the convention $A/\infty=0$}
    \State $[\vec v_1, \sim ]\gets \text{qr}(\vec w)$  \Comment{compute the thin QR decomposition}  

	\For{$j = 1, \dots, k$}
	\State  Compute $\vec w=(I-A/\xi_{j})A\vec v_{j}$
    \For{$i = 1, \dots, j$}
    \State  $(\underline{H}_k)_{\vec i,\vec j}\gets \vec {v}_i^H\vec w$ \Comment{where $\vec i$ and  $\vec j$ are block indices}
    \State$\vec w \gets \vec w-\vec v_j(\underline{H}_k)_{\vec i,\vec j}$
    \EndFor
    \State$[\vec v_{j+1}, (\underline{H}_k)_{\vec {j+1},\vec{j}} ]\gets \text{qr}(\vec w)$  \Comment{compute the thin QR decomposition}  
    \State$(\underline{K}_k)_{\vec i,1:\vec{j+1}b}\gets (\underline{H}_k)_{\vec{i},1:\vec{j+1}b}/\xi_{j} -\vec{e}_j,$ \Comment{where $\vec{e}_j=[0,\dots,0, I_b,0]^T$ }  
	\EndFor
    \State $V_k\gets[\vec v_1,\dots,\vec v_{k+1}]$
\end{algorithmic}

	\caption{Block Rational Arnoldi} \label{algorithm:block-Arnoldi}

\end{algorithm}

Relation \eqref{eqn:rad0} completely determines the
rational Krylov subspace, and encodes all the information regarding 
poles and column span of the starting block vector. 

\begin{definition}\label{def:BRAD}[\cite{elsworth2020block}] 
    Let $A\in \C^{n\times n}$. A relation of the form
\begin{equation*}
AV_{k+1}\underline{K_k}=V_{k+1}\underline{H_k}
\end{equation*}
is called orthonormal block rational Arnoldi decomposition (BRAD), if the
following conditions are satisfied:
\begin{enumerate}
    \item $V_{k+1}\in \C^{n\times b(k+1)}$ has orthonormal columns;
    \item $\underline{K_k}$ and $\underline{H_k}$ are $b(k+1)\times bk$ block upper Hessenberg matrices such that for each $i$ either 
    $(\underline{K}_k)_{\vec{i+1},\vec i}$ or $(\underline{H}_k)_{\vec{i+1},\vec{i}}$ (or both)
    are invertible;
\item for any $i$, 
  there exist two scalars 
  $\mu_i,\nu_i \in \C$, with at least one different
from zero, such that $\mu_i
(\underline{K}_k)_{\vec{i+1},\vec{i}}=\nu_i (\underline{H}_k)_{\vec{i+1},\vec i}$;
\item the numbers $\xi_i=\mu_i/\nu_i$ above, 
  called poles of the BRAD, are outside the
  spectrum of $A$.
\end{enumerate}
\end{definition}

\begin{remark}
    The relation \eqref{eqn:rad0} produced by the block rational Arnoldi
    algorithm is a block rational Arnoldi decomposition, see \cite[Section~2]{elsworth2020block}.
\end{remark}
\begin{remark} The matrices $\underline{H_k}$ and $\underline{K_k}$ of a block
rational Arnoldi decomposition are both full rank. This follows from
\cite[Lemma~3.2]{elsworth2020block}.
\end{remark}

The following theorem relates rational Arnoldi decompositions with rational Krylov subspaces.

\begin{theorem} Let $A\in \C^{n\times n}$, $\vec v\in \C^{n\times b}$,  $\boldsymbol{\xi}_{k+1}=\{\xi_0,\dots \xi_{k}\}$ and let $\rat_{k+1}(A,\vec v)$ be the block rational Krylov subspace with poles $\boldsymbol{\xi}_{k+1}$. Let
    \begin{equation*}
        AV_{k+1}\underline{K_k}=V_{k+1}\underline{H_k}
    \end{equation*}
    be a BRAD with poles $\{\xi_1,\dots \xi_{k}\}$, such that the first block column of $V_{k+1}$ is an orthonormal basis of the space spanned by the columns of $(I-A/\xi_0)^{-1}\vec v$. Then $V_{k+1}$ is an othonormal block basis of $\rat_{k+1}(A,\vec v)$. Moreover, the matrix obtained by taking the first $b j$ columns of $V_{k+1}$ is an orthonormal block basis for $\rat_{j}(A,\vec v)$ for each $j\le k+1$. 
\end{theorem}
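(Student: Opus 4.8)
The plan is to prove the statement by induction on $j$, showing that the first $bj$ columns of $V_{k+1}$ form an orthonormal block basis of $\rat_j(A,\vec v)$. The base case $j=1$ is exactly the hypothesis: the first block column of $V_{k+1}$ is an orthonormal basis for the span of $(I-A/\xi_0)^{-1}\vec v$, which by definition is $\rat_1(A,\vec v)$ (here $R(z) = \Gamma_0/(z-\xi_0)$ or $R(z)=\Gamma_0$ when $\xi_0=\infty$, so $R(A)\circ\vec v$ ranges over $\mathrm{span}\{(I-A/\xi_0)^{-1}\vec v \,\Gamma_0\}$). For the inductive step, I would assume $[\vec v_1,\dots,\vec v_j]$ is an orthonormal block basis of $\rat_j(A,\vec v)$ and show $[\vec v_1,\dots,\vec v_{j+1}]$ is one of $\rat_{j+1}(A,\vec v)$. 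Orthonormality of the columns is immediate from condition (1) in Definition~\ref{def:BRAD}; the content is that the column span is exactly $\rat_{j+1}(A,\vec v)$, and since $\dim \rat_{j+1}(A,\vec v) = b(j+1)$ by the standing assumption, it suffices to prove one inclusion, say $\mathrm{span}(V_{j+1}) \subseteq \rat_{j+1}(A,\vec v)$, together with $\rat_{j+1}\not\subseteq \mathrm{span}(V_j)$ to force equality of dimensions.

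The key computation is to read off from the BRAD relation \eqref{eqn:rad0}, restricted to its first $j$ block columns, an expression for $\vec v_{j+1}$. Writing column $\vec j$ of the identity $AV_{k+1}\underline{K_k} = V_{k+1}\underline{H_k}$ and using that both $\underline H_k$ and $\underline K_k$ are block upper Hessenberg, one gets a relation of the form
\begin{equation*}
  A\Bigl(\sum_{i=1}^{j+1}\vec v_i (\underline K_k)_{\vec i,\vec j}\Bigr) = \sum_{i=1}^{j+1}\vec v_i (\underline H_k)_{\vec i,\vec j},
\end{equation*}
and by condition (3), $\mu_j (\underline K_k)_{\vec{j+1},\vec j} = \nu_j (\underline H_k)_{\vec{j+1},\vec j}$ with pole $\xi_j = \mu_j/\nu_j$. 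Moving the terms involving $\vec v_1,\dots,\vec v_j$ to one side and solving for $\vec v_{j+1}$ (using the invertibility in condition (2) of the trailing subdiagonal block that does not vanish), one sees that $\vec v_{j+1}$ equals $(I - A/\xi_j)^{-1}$ — or $A$, if $\xi_j=\infty$ — applied to a block vector lying in $\mathrm{span}(V_j) = \rat_j(A,\vec v)$, up to right multiplication by $b\times b$ matrices and addition of elements of $\rat_j(A,\vec v)$. Concretely: if $\vec u = \sum_{i=1}^j \vec v_i \Gamma_i = R(A)\circ\vec v$ with $R(z) = P(z)/Q_j(z)$, $\deg P \le j-1$, then $(I-A/\xi_j)^{-1}\vec u = \widehat R(A)\circ\vec v$ where $\widehat R(z) = P(z)/\bigl(Q_j(z)(1-z/\xi_j)\bigr)$, which has numerator of degree $\le j-1$ and denominator $Q_{j+1}(z)$ (up to a nonzero scalar from the factor $(1-z/\xi_j)$ vs $(z-\xi_j)$); hence $\vec v_{j+1}\in\rat_{j+1}(A,\vec v)$. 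Lemma~\ref{rmk:product-scalar-polynomial} is the tool that makes these manipulations of $\circ$ and $\circ^{-1}$ with scalar rational factors rigorous, and the finite-pole versus infinite-pole cases should be handled in parallel, using the convention $A/\infty = 0$.

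The main obstacle is handling the two cases $\xi_j$ finite and $\xi_j = \infty$ uniformly in the algebra of the $\circ$/$\circ^{-1}$ operators, and making sure the bookkeeping on polynomial degrees is tight enough to land in $\rat_{j+1}$ and not merely in some larger space — in particular, verifying that when $\xi_j=\infty$, the extra factor of $A$ raises the numerator degree by exactly one (from $\le j-1$ to $\le j$) so that $\vec v_{j+1}\in\rat_{j+1}$ with $Q_{j+1}=Q_j$, whereas when $\xi_j$ is finite the numerator degree is unchanged but the denominator gains the factor $(z-\xi_j)$. A secondary, more routine point is the dimension argument: one needs that $\vec v_{j+1}\notin\mathrm{span}(V_j)$, which follows because $[\vec v_1,\dots,\vec v_{j+1}]$ has orthonormal columns so $\vec v_{j+1}$ is orthogonal to $\mathrm{span}(V_j)=\rat_j$ and nonzero (its columns being part of an orthonormal set); combined with $\dim\rat_{j+1}=b(j+1)$ and the inclusion proved above, this yields $\mathrm{span}(V_{j+1}) = \rat_{j+1}(A,\vec v)$, closing the induction. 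The final claim about $V_{k+1}$ being a basis of $\rat_{k+1}(A,\vec v)$ is then just the case $j=k+1$.
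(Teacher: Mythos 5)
The paper does not actually prove this theorem: it states it and defers the proof to the cited reference \cite{elsworth2020block}, so there is no in-paper argument to compare against. Your induction is the standard route (and essentially the one taken in that reference), and it is sound: the base case matches the definition of $\rat_1(A,\vec v)$; reading off the $j$th block column of the BRAD and using conditions (2)--(3) of Definition~\ref{def:BRAD} correctly splits into the two cases ($\xi_j=\infty$ forces $(\underline{K}_k)_{\vec{j+1},\vec j}=0$ and $(\underline{H}_k)_{\vec{j+1},\vec j}$ invertible, so $\vec v_{j+1}$ is $A$ applied to an element of $\rat_j$ modulo $\rat_j$ and right $b\times b$ factors; $\xi_j$ finite gives $\vec v_{j+1}=(A-\xi_jI)^{-1}(\cdot)$), and Lemma~\ref{rmk:product-scalar-polynomial} justifies the degree bookkeeping, which you track correctly in both cases. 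One small simplification: your auxiliary condition $\rat_{j+1}\not\subseteq\mathrm{span}(V_j)$ is redundant. Since the columns of $V_{j+1}$ are orthonormal, $\mathrm{span}(V_{j+1})$ has dimension exactly $b(j+1)$, and the paper's standing assumption that $\dim\rat_{j+1}(A,\vec v)=b(j+1)$ together with the inclusion $\mathrm{span}(V_{j+1})\subseteq\rat_{j+1}(A,\vec v)$ already forces equality; note that this standing assumption (or some nondegeneracy hypothesis in its place) is genuinely needed, so it is right that your argument invokes it.
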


For the proof of the theorem and a more detailed description of block rational Arnoldi decompositions we refer to \cite{elsworth2020block}.

Let $V_k$ be the matrix obtained by taking the first $b k$ columns of $V_{k+1}$. The computation of the projected matrix $A_k=V_{k}^HAV_{k}$ by using the formula is usually expensive if the dimension of the matrix $A$ is large.
For the case of Hermitian $A$, several methods that exploit the structure of $A_k$ have been developed to avoid expensive operations for the computation, see for instance \cite{casulli2021computation, palitta2021short}. 
In the non Hermitian case, it is more difficult to exploit a structure of $A_k$. However, if the last pole of the associated BRAD is equal to infinity the projected matrix can be easily computed as $A_k=H_kK_k^{-1}$, where $K_k$ and $H_k$ are the head $kb\times kb$ principal submatrix of $\underline{K_k}$ and $\underline{H_k}$ respectively. To prove this, notice that if the last pole is equal to infinity then the last block row of $\underline{K_k}$ has to be zero, then since $\underline{K_k}$ is full rank, $K_k$ is invertible, hence multiplying both the terms of the block rational Arnoldi decomposition \eqref{eqn:rad0} on the left by $V_{k}^H$ and on the right by $K_k^{-1}$ we obtain 
$A_k=H_k K_k^{-1}$.

A technique that is often used to compute $A_k$ is to add a pole 
equal to infinity every time we want to compute a new projected matrix. 
However, this would significantly increase the size of the block rational
Krylov subspace considered by Algorithm~\ref{algorithm:block-Arnoldi}. In the next section, we describe a way to ensure that the last pole is always equal to infinity, avoiding these additional steps.
 
\subsection{Reordering poles}\label{sec:swap-poles} We propose to start the
Krylov method with $\xi_1=\infty$, then after each step transform the block
rational Arnoldi decomposition into another one that has the last two poles
swapped. Doing this procedure after each step of the block rational Krylov
method the last pole is always equal to infinity. 

This technique has been already described for the non-block case in
\cite{guttel2010rational}. In the following, we introduce a practical way to
swap the last two poles by using unitary transformations. 

Let us consider a block rational Arnoldi decomposition 

\begin{equation}\label{eqn: Rad_hat}
    A\hat V_{k+1}\underline{\hat K_k}=\hat V_{k+1}\underline{\hat H_k}
    \end{equation}
      with poles $\{\xi_1,\dots,\xi_{k-2},\infty,\xi_k\}$. By Definition~\ref{def:BRAD}, since the second last pole is equal to infinity, the submatrix
      $(\underline{\hat K_k})_{\mathbf{k}, \mathbf{k-1}}$ is equal to zero. Moreover, to
      produce a new block rational Arnoldi decomposition that has the last pole
      equal to infinity it is sufficient to annihilate the submatrix
      $(\underline{\hat K_k})_{\vec{k+1}, \vec{k}}$, keeping the block Hessenberg
      structure of the two matrices. This can be done by employing unitary
      transformations. Let
\begin{equation*}
    Q_1R_1=\begin{bmatrix}
 (\underline{\hat K_k})_{\vec k,\vec k}\\
(\underline{\hat K_k})_{\vec{k+1},\vec k}
\end{bmatrix} 
\end{equation*}
be a thin QR decomposition and let $R_2Q_2$ be an RQ decomposition\footnote{An RQ decomposition consists in writing a matrix as the product of an upper triangular matrix times a unitary matrix. It can be computed by using the same techniques involved in the computation of a QR decomposition.} for the last
block row of
\begin{equation*}
    Q_1^H\begin{bmatrix}
        (\underline{\hat H_k})_{\vec k,\vec{k-1}}&(\underline{\hat H_k})_{\vec k,\vec k}\\
        0&(\underline{\hat H_k})_{\vec{k+1},\vec k}
        \end{bmatrix} .
\end{equation*}
Then, the matrices 
\begin{equation*}
    Q_1^H\begin{bmatrix}
        0&(\underline{\hat K_k})_{\vec k,\vec k}\\
       0&(\underline{\hat K_k})_{\vec{k+1},\vec k}
       \end{bmatrix} Q_2^H
       \quad \text{ and } \quad 
       Q_1^H\begin{bmatrix}
       (\underline{\hat H_k})_{\vec k ,\vec{k-1}}&(\underline{\hat H_k})_{\vec k ,\vec k }\\
       0&(\underline{\hat H_k})_{\vec{k+1},\vec k}
       \end{bmatrix}  Q_2^H
\end{equation*}
are block upper triangular and the last block row of the first one is equal to zero.

If we let 
\begin{align*}
V_{k+1}&=\hat V_{k+1}(I_{b(k-1)}\oplus Q_1),\\
 \qquad \underline{K_k}&=(I_{b(k-1)}\oplus Q_1^H)\underline{\hat K_k}(I_{b(k-2)}\oplus Q_2^H)\\
 \underline{H_k}&=(I_{b(k-1)}\oplus Q_1^H)\underline{\hat H_k}(I_{b(k-2)}\otimes Q_2^H),
\end{align*}
where $\oplus$ denotes the Kronecker sum, the relation
\begin{equation*}
    A V_{k+1}\underline{ K_k}= V_{k+1}\underline{ H_k}    
    \end{equation*}
    is a new block rational Arnoldi decomposition that has infinity as last pole.

The computational cost of this procedure is $\mathcal{O}(kb^3)$, which is negligible with respect to the computational cost of a step of block rational Arnoldi algorithm \ref{algorithm:block-Arnoldi}.

\remark{When we transform the matrix $\hat V_{k}$ in $V_{k}$ we only perform a
linear combination between the last two block columns. For this reason the
top-left principal $b(k-1) \times b(k-1)$ submatrix of $A_k$ is equal to
$A_{k-1}$ . Hence, to compute $A_k$ it is sufficient to determine its last block
row and column. This can be done using the relation $A_k=H_kK_k^{-1}$ and so 
\begin{equation*}
A_k\vec e_k=H_kK_k^{-1}\vec e_k \quad \text{end} \quad \vec e_k^T A_k=\vec e_k^TH_kK_k^{-1}.
\end{equation*}
}

\section{Rational Krylov for Sylvester equation} \label{sec:Krylov-Sylvester}

Krylov subspace methods are one of the most popular methods for solving the Sylvester equation \eqref{eqn:Sylvester} where
$A, B$ are large size matrices and $\vec
u, \vec v$ are tall and skinny. In such a case, the
solution can be approximated by a low-rank matrix to avoid storing the complete
solution which is prohibitive for large $n$ and $m$. We refer to
\cite[Section~4.4]{simoncini2016computational} for a more complete discussion
about the topic.

The technique described in Section \ref{sec:swap-poles} can be used for the
resolution of Sylvester equations: let $U_{h+1}$ and ${V_{k+1}}$ be orthonormal
block basis for $\rat_{h+1}(A,\vec u)$ and $\rat_{k+1}(B^H,\vec v)$
respectively, generated by the block rational Arnoldi algorithm  \ref{algorithm:block-Arnoldi}and let $U_h\in
C^{n\times bh}$ and $V_k\in \C^{m\times bk}$ be the matrices obtained removing
from $U_{h+1}$ and $V_{k+1}$ the last $b$ columns. Letting $A_h=U_h^HAU_h$ and
$B_k=V_k^HBV_k$, the solution $X$ can be approximated by
$X_{h,k}=U_h\hat{X}V_k^H$, where $\hat{X}$ solves the projected equation
\begin{equation}\label{eqn:proj-sylv}
A_h\hat X-\hat X B_k=U_h^H\vec u(V_k^H\vec v)^H.
\end{equation}

For simplicity of notation in the rest of the section, we assume that $\xi_0=\infty$, that is, 
\begin{equation*}
    U_h^H\vec u= \norm{\vec u}_2\vec e_1 \quad \text{ and } \quad V_k^H\vec 
    v=\norm{\vec v}_2\vec e_1.
\end{equation*}


If $U_{h+1}$ and ${V_{k+1}}$ are determined as described in Section \ref{sec:swap-poles}, the projected matrices $A_h$ and $B_k$ can be easily computed at each step. In the following we show that this choice of poles  also allows a cheap computation of the norm of the residual matrix
\begin{equation*}
R_{h,k}=AX_{h,k}-X_{h,k}B-\vec u\vec v^H.
\end{equation*}
Since the last pole used to generate $\rat_{h+1}(A,u)$ is always equal to infinity, the columns of $AU_h$ belongs to $\rat_{h+1}(A,u)$, that is, 
\begin{equation*}
U_{h+1}U_{h+1}^HAU_h=AU_h.
\end{equation*} 
In the same way it holds 
\begin{equation*} V_k^HBV_{k+1}V_{k+1}^H=V_k^HB.
\end{equation*}
Using the last two relations, the definition of $X_{h,k}$ and that the first block columns of $U_{h+1}$ and $V_{k+1}$ are given by the orthonormalization of $\vec u$ and $\vec v$ respectively, we can rewrite the residual as 
\begin{equation*}
\begin{split}
R_{h,k}&=U_{h+1}U_{h+1}^HAU_h\hat X V_k^H-U_h\hat X V_k^HBV_{k+1}V_{k+1}^H-U_{h+1} \norm{\vec u}_2\norm{\vec v}_2\vec e_1\vec e_1^TV_{k+1}^H\\
&=U_{h+1}\left(U_{h+1}^HAU_h\hat X 
\begin{bmatrix}I_{bh}&0\end{bmatrix}-
\begin{bmatrix}I_{bk}\\0\end{bmatrix}
\hat X V_k^HBV_{k+1}-\norm{\vec u}_2\norm{\vec v}_2\vec e_1 \vec e_1^T\right)V_{k+1}^H\\
&=U_{h+1}
\begin{bmatrix}
U_{h}^HAU_h\hat X 
-
\hat X V_k^HBV_{k}-\norm{\vec u}_2\norm{\vec v}_2\vec e_1\vec e_1^T
&-\hat X V_k^HB \vec v_{k+1}\\
\vec u_{h+1}^HAU_h\hat X&0
\end{bmatrix}
V_{k+1}^H\\
&=U_{h+1}
\begin{bmatrix}
A_h\hat X 
-
\hat X B_k-\norm{\vec u}_2\norm{\vec v}_2\vec e_1\vec e_1^T
&-\hat X V_k^HB \vec v_{k+1}\\
\vec u_{h+1}^HAU_h\hat X&0
\end{bmatrix}
V_{k+1}^H\\
&=U_{h+1}
\begin{bmatrix}
0
&-\hat X V_k^HB \vec v_{k+1}\\
\vec u_{h+1}^HAU_h\hat X&0
\end{bmatrix}
V_{k+1}^H\\
\end{split}
\end{equation*}
where $\vec u_{h+1}$ and $\vec v_{k+1}$ are the last block columns of $U_{h+1}$
and $V_{k+1}$ respectively, and the zero matrix in the top left corner of the
block matrix in the last row is given by equation \eqref{eqn:proj-sylv}.

Since the columns of $U_{h+1}$ and $V_{k+1}$ are orthonormal, the norm of the residual is equal to the norm of the block matrix
\begin{equation} \label{eqn:residual_struct}
\begin{bmatrix}
0
&-\hat X V_k^HB \vec v_{k+1}\\
\vec u_{h+1}^HAU_h\hat X&0
\end{bmatrix}.
\end{equation} 

Let us now consider the block rational Arnoldi decomposition 
\begin{equation*}
AU_{h+1}\underline{K_h}^{(A)}=U_{h+1}\underline{H_h}^{(A)}. \end{equation*}
Multiplying both the terms of the equations on the right by $\left({K_h}^{(A)}\right)^ {-1}$, where ${K_h}^{(A)}$ is the $bh\times bh$ head principal submatrix of $\underline{K_h}^{(A)}$, noting that the last block row of $\underline{K_h}^{(A)}$ is equal to zero, we have
\begin{equation}\label{eqn:smallA}
AU_{h}=U_{h+1}\underline{H_h}^{(A)}\left({K_h}^{(A)}\right)^ {-1}.
\end{equation}

Analogously, if 
\begin{equation*}
B^HV_{k+1}\underline{K_k}^{(B)}=V_{k+1}\underline{H_k}^{(B)}
\end{equation*}
is a block rational Arnoldi decomposition, we have that 
\begin{equation} \label{eqn:smallB}
B^HV_{k}=V_{k+1}\underline{H_k}^{(B)}\left({K_k}^{(B)}\right)^ {-1},
\end{equation}
where $K_k^{(B)}$ is the head $bk\times bk$ principal submatrix of $\underline{K_k}^{(B)}$.

Using the equations \eqref{eqn:smallA} and \eqref{eqn:smallB}, we can rewrite the matrix \eqref{eqn:residual_struct} as

\begin{equation} \label{eqn:small_residual}
\begin{bmatrix}
0
&-\hat X \left(K_k^{(B)}\right)^{-H}\left(\underline{H_k}^{(B)}\right)^HV_{k+1}^H \vec v_{k+1}\\
\vec u_{h+1}^HU_{h+1} \underline{H_h}^{(A)}\left(K_h^{(A)}\right)^{-1}\hat X&0
\end{bmatrix},
\end{equation}
exploiting the orthogonality of the columns of $U_{k+1}$ and $V_{k+1}$, the matrix \eqref{eqn:small_residual} is equal to

\begin{equation*}
\begin{bmatrix}
0
&\hat X \left(K_k^{(B)}\right)^{-H}\left(\underline{H_k}^{(B)}\right)^H\vec e_{k+1}^H\\
\vec e_{h+1} \underline{H_h}^{(A)}\left(K_h^{(A)}\right)^{-1}\hat X&0
\end{bmatrix},
\end{equation*}
where $\vec e_{h+1}\in \C^{b(h+1)\times b}$ and $\vec e_{k+1}\C^{b(k+1)\times b}$.

The norm of this matrix can be recovered by 
the norms of the block vectors 
\begin{equation*}
    \vec e_{h+1}\underline{H_h}^{(A)}\left(K_h^{(A)}\right)^{-1}\hat X \quad \text{ and } \quad \hat X \left(K_k^{(B)}\right)^{-H}\left(\underline{H_k}^{(B)}\right)^H \vec e_{k+1}^H.    
\end{equation*} In particular the computation of the norm of the residual does not involve the matrices $A$ and $B$, hence it can be performed with a computational cost that does not depend on $n$ and $m$.

\section {Residual and pole selection} \label{sec:residual-and-pole-selection}
The aim of this section is to prove the following theorem:
\begin{theorem} \label{thm:block-residual}
    Let $A \in \C^{n\times n}$, $B\in \C^{m\times m}$, $\vec u\in \C^{n\times b}$ and $\vec v\in \C^{m\times b}$. Let $U\in \C^{n\times b h}$ and $V\in \C^{m\times b k}$ be orthonormal block basis for $\rat_h(A,\vec u,\boldsymbol{\xi}_{h}^{(A)})$ and  $\rat_k(B^H,\vec v,\boldsymbol{\xi}_{k}^{(B)})$, respectively, and let $A_h=UAU^H$, $B_k=VBV^H$. Let $X_{h,k}=UY_{h,k}V^H$ where $Y_{h,k}$ is the solution of the Sylvester equation
    \begin{equation*}
        A_hY_{h,k}-Y_{h,k}B_k=\vec u^{(h)} (\vec v^{(k)})^H, 
    \end{equation*}
    with $\vec u^{(h)}=U^H\vec u,$ and $\vec v^{(k)}=V^H\vec v.$ 
    Let $\chi_A(z)\in \pol_h(\C^{b\times b})$ and $\chi_B(z)\in \pol_k(\C^{b\times b})$, be monic block characteristic polynomials of $A_h$ with respect to $\vec u^{(h)}$ and $B_k$ with respect to $\vec v^{(k)}$, respectively. Define 
    \begin{equation*}
        R_A^G(z)=\frac{\chi_A(z)}{Q_A(z)} \quad   \text{ and } \quad R_B^G(z)=\frac{\chi_B(z)}{Q_B(z)},
    \end{equation*}
    where 
    \begin{equation*}
        Q_A(z)=\prod_{\xi\in\boldsymbol{\xi}^{(A)}, \xi \neq \infty}(z-\xi) \quad \text{ and } \quad Q_B(z)=\prod_{\xi\in\boldsymbol{\xi}^{(B)}, \xi \neq \infty}(z-\xi).
    \end{equation*}

Then the residual matrix can be written as $R_{h,k}=\rho_{1,2}+\rho_{2,1}+\rho_{2,2}$, where
\begin{align*}
    &\rho_{1,2}=U({R_B^G}^H(A_h)\circ^{-1}\vec u^{(h)})(R_B^G(B^H)\circ \vec v)^H,\\
    &\rho_{2,1}=\left(R_A^G(A)\circ\vec u\right)({R_A^G}^H(B_k)\circ^{-1} \vec v^{(k)})^HV^H,\\
    &\rho_{2,2}=\left(R_A^G(A)\circ \vec u\left(R_A^G(\infty)\right)^{-1}\right) \left(R_B^G(B^H)\circ \vec v\left(R_B^G(\infty)\right)^{-1}\right)^H,
\end{align*}
with 
\begin{equation*}
    R_A^G(\infty)=\lim_{|\lambda|\rightarrow\infty}R_A^G(\lambda) \quad \text{ and } \quad R_B^G(\infty)=\lim_{|\lambda|\rightarrow\infty}R_B^G(\lambda).
\end{equation*}
Moreover 
\begin{equation}\label{eqn:redidual-norm-sum}
    \norm{R_{h,k}}^2_F=\norm{\rho_{1,2}}^2_F+\norm{\rho_{2,1}}^2_F+\norm{\rho_{2,2}}^2_F.
\end{equation}

\end{theorem}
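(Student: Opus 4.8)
The plan is to express the residual $R_{h,k}$ through the integral representation of the error $X - X_{h,k}$, and then to identify each term of the decomposition via the generalized Cauchy formula (Theorem~\ref{thm:generalized-cauchy}) together with the characteristic-polynomial machinery (Definition~\ref{def:charact_poly} and Theorem~\ref{thm:blk-char-poly}). First I would write $X$ using the contour integral \eqref{thm:solution-sylv}, with a contour $\gamma$ enclosing $\Lambda(A) \cup \Lambda(A_h)$ but not $\Lambda(B) \cup \Lambda(B_k)$. The key algebraic observation, exactly as in \cite{beckermann2011error} but now block-valued, is that projection onto the rational Krylov subspaces amounts to replacing $(zI_n - A)^{-1}\vec u$ and $(zI_m - B^H)^{-1}\vec v$ by their "rational-interpolant" counterparts: since $\chi_A(z)/Q_A(z) = R_A^G(z)$ and the columns of $R_A^G(A) \circ \vec u$ vanish after projection (because $\chi_A(A_h) \circ \vec u^{(h)} = 0$ by the characteristic-polynomial property), the difference between the true and projected resolvent action is governed by $R_A^G$, and symmetrically $R_B^G$ controls the $B$-side. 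Concretely, I would show $U(zI_{bh} - A_h)^{-1}\vec u^{(h)} - (zI_n - A)^{-1}\vec u$ is, up to the scalar factor $1/Q_A(z)$ times an entire matrix function, something whose contour integral against the $B$-side resolvent picks out precisely $R_A^G(A)\circ\vec u$ and its projected analogue.

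Next I would substitute these two one-sided identities into $X - X_{h,k}$ and expand the product, which naturally yields three cross-terms: one where only the $B$-side is "unprojected" ($\rho_{1,2}$), one where only the $A$-side is ($\rho_{2,1}$), and one where both are ($\rho_{2,2}$); the fourth term, where neither side carries the extra rational factor, integrates to zero because the integrand is holomorphic inside $\gamma$ (both projected resolvents are analytic there once $\gamma$ separates the relevant spectra). For each surviving term I would apply Theorem~\ref{thm:generalized-cauchy} to turn the contour integral of $R^G(zI)\circ^{-1}[(zI - M)^{-1}\vec w]$ into $R^G(M)\circ^{-1}\vec w$, using Lemma~\ref{rmk:commutativity2} to move between $R$ and $R^H$ when transposing the $B^H$-side back to a $B$-side expression, and keeping track of the limiting value $R^G(\infty)$ that appears in $\rho_{2,2}$ because there the leading (degree-matching) behavior of the rational matrix at infinity survives. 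Applying $A(X - X_{h,k}) - (X - X_{h,k})B$ to convert the error decomposition into a residual decomposition (the residual being $A X_{h,k} - X_{h,k}B - \vec u\vec v^H = -(A(X-X_{h,k}) - (X-X_{h,k})B)$, up to the projection bookkeeping already done in Section~\ref{sec:Krylov-Sylvester}), and simplifying with the identities $R_A^G(A)\circ^{-1}(\chi_A\text{-action}) = $ resolvent differences, should reproduce exactly the three stated forms of $\rho_{1,2},\rho_{2,1},\rho_{2,2}$.

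Finally, for the Pythagorean identity \eqref{eqn:redidual-norm-sum} I would show the three terms have pairwise-orthogonal column and row spaces. The point is that $\rho_{1,2}$ has its \emph{left} factor in $\mathrm{span}(U) = \rat_h(A,\vec u)$ while $\rho_{2,1}$ and $\rho_{2,2}$ have their left factors of the form $R_A^G(A)\circ\vec u$, which — because $R_A^G$ has numerator degree exactly $h$ with the characteristic polynomial as numerator — lies in $\rat_{h+1}(A,\vec u) \ominus \rat_h(A,\vec u)$, i.e. orthogonal to $\mathrm{span}(U)$; symmetrically on the right with $V$ and $R_B^G(B^H)\circ\vec v$. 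Thus $\langle \rho_{1,2},\rho_{2,1}\rangle_F = 0$ via the left factors, $\langle \rho_{1,2},\rho_{2,2}\rangle_F = 0$ via the left factors, and $\langle \rho_{2,1},\rho_{2,2}\rangle_F = 0$ via the right factors; expanding $\|\rho_{1,2}+\rho_{2,1}+\rho_{2,2}\|_F^2$ then gives the claim. I expect the main obstacle to be the careful bookkeeping in the block setting of how the characteristic polynomial $\chi_A$ plays the role of the scalar interpolating polynomial in \cite{beckermann2011error}: one must verify that $U(zI_{bh}-A_h)^{-1}\vec u^{(h)}$ really is the rational matrix interpolant whose residual is $\chi_A/Q_A$ times an analytic factor, which requires combining Theorem~\ref{thm:blk-char-poly} (to know $\chi_A$ factors into solvents and annihilates $\vec u^{(h)}$) with the operator calculus of Section~\ref{sec:matrix-poly-rat-functions}, and that the non-commutativity of matrix polynomials does not spoil the factorization of the integrand.
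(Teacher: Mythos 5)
Your proposal is correct and follows essentially the same route as the paper: the contour-integral representation of $X-X_{h,k}$, a three-term splitting of the error driven by the one-sided identities $\vec x - U\tilde{\vec x}=R_A^G(zI_n)\circ^{-1}R_A^G(A)\circ\vec x$ (the exactness/interpolation property you rightly flag as the main obstacle, which is exactly the content of the paper's Lemma~\ref{lem:exactness} through Lemma~\ref{lem:third}), evaluation of the resulting integrals via Theorem~\ref{thm:generalized-cauchy} with the residue at infinity producing the constant factors in $\rho_{2,2}$, and the Pythagorean identity from the Galerkin orthogonality $U^HR_A^G(A)\circ\vec u=0$ and $V^HR_B^G(B^H)\circ\vec v=0$ of Corollary~\ref{cor:char-poly}. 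The only slip is cosmetic: in your expansion the ``fourth term'' $U\tilde{\vec x}\tilde{\vec y}^HV^H$ cancels algebraically against the $X_{h,k}$ integrand rather than by holomorphy (its integrand is not analytic inside the contour, which encloses $\Lambda(A_h)$), but this does not affect the argument.
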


\begin{remark}\label{rmk:infinity-pole}
    If one of the poles of $\vec \xi_A$ or $\vec \xi_B$ is chosen equal to infinity, then $\rho_{2,2}=0$.
\end{remark}

The representation of the residual matrix given by Theorem~\ref{thm:block-residual} allows us to provide adaptive techniques for the pole selection for the resolution of Sylvester equations.

\subsection{Proof of Theorem~\ref{thm:block-residual}}
Theorem~\ref{thm:block-residual} and the proof we provide in this section, are generalizations of the ones provided by Beckermann in \cite{beckermann2011error} for the case of classical rational Krylov methods.

Let us start by introducing some lemma that is needed for the proof of the theorem.

\begin{lemma}[Block exactness]{\label{lem:exactness}} For any $R_A(z)\in \pol_{h}(\C^{b\times b})/Q_A(z),$ we have  
    \begin{equation*}
        UU^HR_A(A)\circ\vec u=UR_A(A_h)\circ \vec u^{(h)},
    \end{equation*}
    in particular, if $R_A(z)\in \pol_{h-1}(\C^{b\times b})/Q_A(z),$ it holds
    \begin{equation*}
        R(A)\circ\vec u=UR_A(A_h)\circ \vec u^{(h)}.
    \end{equation*}     
    
    Similarly for any $R_B\in \pol_{k}(\C^{b\times b})/Q_B(z),$ we have that $VV^HR_B(B^H)\circ \vec v=VR_B(B_k)\circ \vec v^{(k)}$ and for any $R_B\in \pol_{k-1}(\C^{b\times b})/Q_B(z),$ it holds $R_B(B^H)\circ \vec v=VR_B(B_k)\circ \vec v^{(k)}$.
   
\end{lemma}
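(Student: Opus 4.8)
The plan is to reduce everything to the case $R_A(z) = z^j \Gamma/Q_A(z)$ (one power of $z$ with a matrix coefficient on the right) by linearity of the operator $\vec v\mapsto R_A(A)\circ\vec v$, and then to prove the key identity $U^H A^j U^H\vec u^{(h)} = A_h^j \vec u^{(h)}$-type statement by induction on $j$. The crucial observation is that $\vec u^{(h)} = U^H\vec u$ lies in the rational Krylov subspace generated (in the projected world) by $A_h$, and that the span of $U$ is closed under multiplication by $A$ in the appropriate, rational-Krylov-adapted sense. Concretely, writing $R_A(z) = P(z)/Q_A(z)$ with $\deg P \le h$, I would first handle the denominator: since $Q_A(z)$ is the product of the finite poles $\xi$, and $U$ spans $\rat_h(A,\vec u,\boldsymbol\xi^{(A)})$, the block vector $Q_A(A)^{-1}(\text{anything in }\rat_h)$ stays controlled; the point is that $Q_A(A_h)$ and $Q_A(A)$ restricted to the subspace agree in the sense $U Q_A(A_h)^{-1} U^H w = Q_A(A)^{-1} w$ whenever $w\in\mathrm{range}(U)$ and $Q_A(A)^{-1}w$ also lies there. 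So the real content is the polynomial part.

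For the polynomial part I would argue as follows. By definition of the block rational Krylov subspace, $R_A(A)\circ\vec u \in \rat_{h+1}(A,\vec u)$ when $\deg P\le h$ (one extra degree because the numerator has degree $h$ while $Q_A$ has degree $h$ minus the number of infinite poles), hence $UU^H$ projects it onto $\rat_h$ exactly when... — more cleanly, I would use the Arnoldi decomposition $AV_{k+1}\underline{K_k} = V_{k+1}\underline{H_k}$ (here its analogue for $A$, $U$) to show that applying $A$, inverting finite-pole factors $(I - A/\xi)$, and taking $U^H$ on the left commutes appropriately with the same operations performed on $A_h = U^H A U$. The induction step is: if $\vec w\in\mathrm{range}(U)$ with $\vec w = U\vec w^{(h)}$ and we know $U^H(I-A/\xi)^{-1}A\,\vec w = (I-A_h/\xi)^{-1}A_h\,\vec w^{(h)}$-type statements, then multiply through and use that the next block column of $U$ absorbs exactly the components of the new vector that leave $\rat_h$, which forces the projected and unprojected recurrences to coincide on the first $h$ block columns. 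The degree restriction $\deg P\le h$ (resp. $\deg P\le h-1$) is exactly what guarantees we never "run off the end" of the subspace: with degree $\le h-1$ the result lands in $\rat_h$ itself, so $UU^H$ acts as the identity and the projection drops out, giving the sharper second statement.

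The $B$ side is identical after replacing $A$ by $B^H$, $\vec u$ by $\vec v$, $U$ by $V$, $h$ by $k$, and $\boldsymbol\xi^{(A)}$ by $\boldsymbol\xi^{(B)}$; since $B_k = V^H B V$ while we work with $B^H$, one should note $V^H B^H V = (V^H B V)^H = B_k^H$, but because the lemma phrases the $B$-side with $R_B(B_k)$ acting on $\vec v^{(k)}$ one must be slightly careful that the characteristic polynomial conventions match — I would double-check that $B_k$ as defined via $V^H B V$ is consistent with $B^H$ appearing in $R_B(B^H)\circ\vec v$, which it is because the rational Krylov space is built from $B^H$ and the projected quantity is the compression of $B^H$, up to the Hermitian-transpose bookkeeping already set up in Lemma~\ref{rmk:commutativity2}.

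The main obstacle I expect is getting the bookkeeping of the degree count exactly right — i.e., why $R_A(z)\in\pol_h(\C^{b\times b})/Q_A(z)$ (numerator degree $h$) gives a vector in $\rat_{h+1}$ rather than $\rat_h$, so that $UU^H$ is genuinely needed in the first identity and genuinely removable in the second — together with making the inductive commutation argument between the $A$-recurrence and the $A_h$-recurrence clean, since that is where the orthonormality of $U$ and the Hessenberg structure of $\underline{K_h}^{(A)},\underline{H_h}^{(A)}$ really enter. Everything else is routine manipulation with the $\circ$ and $\circ^{-1}$ operators and the identities from Lemmas~\ref{rmk:product-scalar-polynomial}–\ref{rmk:commutativity2}.
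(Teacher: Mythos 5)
Your overall strategy -- reduce by linearity to monomials $z^j$, induct on $j$ using that $A^j\vec u$ stays in $\rat_h(A,\vec u)$ for $j\le h-1$ so that $UU^H$ acts as the identity one step before the end, and peel off the scalar denominator $Q_A$ by commutativity -- is exactly the paper's approach, and the degree bookkeeping you worry about (numerator degree $h$ lands you in $\rat_{h+1}$, degree $h-1$ keeps you in $\rat_h$) is handled correctly. Two remarks, one of substance.

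First, the substantive soft spot: your treatment of the denominator asserts that $UQ_A(A_h)^{-1}U^H\vec w=Q_A(A)^{-1}\vec w$ for suitable $\vec w$, but this is not something you can take for granted -- in general $U^HQ_A(A)U\neq Q_A(U^HAU)$, and the identity you want is itself an instance of the exactness property for the scalar polynomial $Q_A$ (which has degree $\le h$). The paper closes this loop explicitly: setting $\vec c=Q_A(A)^{-1}\vec u$, it observes that $\rat_h(A,\vec u,\boldsymbol\xi^{(A)})$ is precisely the \emph{polynomial} block Krylov space with starting block $\vec c$, applies the already-proved polynomial case to $Q_A(A)\circ\vec c=\vec u$ to get $UU^H\vec u=UQ_A(A_h)U^H\vec c$, and multiplies by $Q_A(A_h)^{-1}U^H$ to conclude $U^H\vec c=Q_A(A_h)^{-1}\vec u^{(h)}$. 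You should make this derivation explicit rather than citing the identity as "the point"; as stated it is circular.

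Second, a difference of route rather than correctness: you propose to run the induction through the BRAD relation $AU_{h+1}\underline{K_h}=U_{h+1}\underline{H_h}$ and the factors $(I-A/\xi)^{-1}$, interleaving pole inversions into the recurrence. The paper avoids this entirely: by first absorbing $Q_A(A)^{-1}$ into the starting block $\vec c$ via Lemma~\ref{rmk:commutativity}, the induction only ever involves plain powers of $A$ and needs nothing beyond orthonormality of $U$ and the nestedness of the subspaces -- no Hessenberg structure at all. Your route is essentially the one in \cite[Lemma~4.6]{guttel2010rational} and can be made to work, but it is heavier; I recommend the paper's reduction. Your concern about $B_k$ versus $B_k^H$ on the $B$-side is legitimate bookkeeping (the compression relevant to the lemma is that of $B^H$ onto the range of $V$), and resolving it as you indicate is fine.
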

\begin{proof}
    We only prove the first two identities, since the other claims follow using
    the same argument. The proof is composed of two parts. First, we suppose
    that the poles are all equal to infinity, i.e., $Q_A(z)=1$. Then, we extend
    the proof for a generic choice of poles.

 For the first part, by linearity, it is sufficient to prove the equalities for
 $R_A(z)=z^j$ with $j\le h$. We proceed by induction on $j$. If $j=0$ there is
 nothing to prove. For $R_A(z)=z^{j+1}$, by the inductive hypothesis we have
 \begin{equation*}
    UU^HA^{j+1}\vec u=UU^HAA^j\vec{u}=UU^HAUA_h^j\vec{u}^{(h)}=UA_h^{j+1}\vec{u}^{(h)}.
 \end{equation*}
 Moreover, if $j+1\le h-1$, $A^{j+1}\vec u\in \rat_h(A,\vec{u},\vec{\infty})$,
 hence  $UU^HA^{j+1}\vec u=A^{j+1}\vec u$.

 Let now $R_A(z)=P(z)/Q_A(z)$ with $P(z)\in \pol(\C^{b\times b})$.
 Using the
 commutativity property of Lemma~\ref{rmk:commutativity}, we have that
 \begin{equation*}
    R_A(A)\circ \vec u= Q_A(A)^{-1}P(A)\circ \vec u=P(A)\circ (Q_A(A)^{-1}\vec u).
 \end{equation*}
Hence, if we let $\vec c=Q_A(A)^{-1}\vec u$, from the result of the first step we have
\begin{equation*}
    UU^HR_A(A)\circ \vec u =UU^HP(A)\circ \vec c=UP(A_h)\circ (U^H\vec c),
\end{equation*}
and, if $R_A(A) \in \pol_{h-1}(\C^{b\times b})/Q_A(A)$, we have
\begin{equation*}
    R_A(A)\circ \vec u =P(A)\circ \vec c=UP(A_h)\circ (U^H\vec c).
\end{equation*}
To conclude it is sufficient to prove that $U^H\vec c=Q(A_h)^{-1}\vec u^{(h)}$.
Since $\vec u= Q(A)\circ\vec c$, by the first step of the proof we have
\begin{equation*}
    \\UU^H\vec u=UU^HQ(A)\circ \vec c=UQ(A_h)\circ (U^H\vec c)=UQ(A_h) U^H\vec c.
\end{equation*}
Since $U^HU=I_{bh}$, multiplying both sides on the left by $Q(A_h)^{-1}U^H$ we
get
\begin{equation*}
    Q(A_h)^{-1}U^H\vec u=U^H\vec c,
\end{equation*}
that concludes the proof.
\end{proof}

\begin{corollary}\label{cor:char-poly}
    Let $\chi_A(z)\in\pol_h(\C^{b\times b})$ and $\chi_{B}(z)\in\pol_k(\C^{b\times b})$ be  monic block characteristic polynomial for $A_h$ with respect to $\vec u^{(h)}$ and $B_k$ with respect to $\vec v^{(k)}$, respectively. Let $R_A^G(z)=\chi_A(z)/Q_A(z)$ and $R_B^G(z)=\chi_B(z)/Q_B(z)$. It holds
    \begin{equation*}
        U^HR_A^G(A)\circ \vec u=0 \quad \text{ and }\quad V^HR_B^G(B)\circ \vec v=0.
    \end{equation*}
    Moreover $R_A^G(A)\circ \vec u$ minimizes
    $\norm{R(A)\circ \vec u}_F$ over all the $R(z)\in \pol_{h}(\C^{b\times b})/Q_A(z)$ such that $R(z)=P(z)/Q_A(z)$ where $P(z)$ is a monic matrix polynomial. Analogously $R_B^G(B)\circ \vec v$ minimizes
    $\norm{R(B)\circ \vec v}_F$ over all the $R(z)\in \pol_{k}(\C^{b\times b})/Q_B(z)$ with monic numerator.
\end{corollary}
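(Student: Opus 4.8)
The plan is to recognize the two assertions as a single Galerkin-orthogonality argument: the relations $U^H R_A^G(A)\circ\vec u = 0$ are exactly the orthogonality conditions that make $R_A^G(A)\circ\vec u$ a minimizer, so I would prove those first and then deduce minimality by a Pythagorean splitting with respect to the Frobenius inner product $\langle M,N\rangle_F = \operatorname{tr}(M^H N)$.

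\textbf{Step 1 (the orthogonality relations).} Since $\chi_A$ is monic of degree $h$, the rational matrix $R_A^G = \chi_A/Q_A$ lies in $\pol_h(\C^{b\times b})/Q_A(z)$, so I would apply the \emph{full-degree} case of Lemma~\ref{lem:exactness} to obtain $UU^H R_A^G(A)\circ\vec u = U\,(R_A^G(A_h)\circ\vec u^{(h)})$. By the definition of the action of a rational matrix, $R_A^G(A_h)\circ\vec u^{(h)} = Q_A(A_h)^{-1}(\chi_A(A_h)\circ\vec u^{(h)})$, and the right-hand side vanishes because $\chi_A$ is a block characteristic polynomial of $A_h$ with respect to $\vec u^{(h)}$ (Definition~\ref{def:charact_poly}). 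Hence $UU^H R_A^G(A)\circ\vec u = 0$, and left-multiplying by $U^H$ and using $U^HU = I_{bh}$ gives $U^H R_A^G(A)\circ\vec u = 0$. The companion identity for $\chi_B$ and $R_B^G$ follows verbatim from the $B$-part of Lemma~\ref{lem:exactness}.

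\textbf{Step 2 (minimality).} Take any admissible $R(z) = P(z)/Q_A(z)$ with $P$ monic of degree $h$. Because $P$ and $\chi_A$ are both monic of degree $h$, the difference $P-\chi_A$ has degree at most $h-1$, so $R - R_A^G = (P-\chi_A)/Q_A$ lies in $\pol_{h-1}(\C^{b\times b})/Q_A(z)$; the \emph{reduced-degree} case of Lemma~\ref{lem:exactness} then shows $(R-R_A^G)(A)\circ\vec u = U\,W$ with $W := (R-R_A^G)(A_h)\circ\vec u^{(h)}$, i.e. it lies in the column span of $U$. Expanding the Frobenius inner product and using Step 1,
\begin{equation*}
\langle R_A^G(A)\circ\vec u,\;(R-R_A^G)(A)\circ\vec u\rangle_F = \operatorname{tr}\!\big((R_A^G(A)\circ\vec u)^H U W\big) = \operatorname{tr}\!\big((U^H R_A^G(A)\circ\vec u)^H W\big) = 0,
\end{equation*}
so the two summands of $R(A)\circ\vec u = R_A^G(A)\circ\vec u + (R-R_A^G)(A)\circ\vec u$ are Frobenius-orthogonal, and $\norm{R(A)\circ\vec u}_F^2 = \norm{R_A^G(A)\circ\vec u}_F^2 + \norm{(R-R_A^G)(A)\circ\vec u}_F^2 \ge \norm{R_A^G(A)\circ\vec u}_F^2$. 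The statement for $R_B^G(B)\circ\vec v$ is obtained identically.

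\textbf{Main obstacle.} There is no genuinely hard computation; the single point that needs care is the interplay between the two degree regimes of Lemma~\ref{lem:exactness}. The orthogonality in Step 1 uses the \emph{degree-$h$} version applied to $R_A^G$ itself, whereas Step 2 needs the \emph{degree-$(h-1)$} version to place $(R-R_A^G)(A)\circ\vec u$ in $\operatorname{range}(U)$; this drop by one degree is available precisely because $\chi_A$ is monic of the same degree as $P$, which is why the minimization is over monic numerators of degree $h$. One should also note that $Q_A(A_h)$ must be invertible for $R_A^G(A_h)\circ\vec u^{(h)}$ to be defined — this is implicit in the pole hypotheses and already assumed in Lemma~\ref{lem:exactness} — and, in the $B$-block, keep track of the operator $B^H$ underlying $\rat_k(B^H,\vec v)$.
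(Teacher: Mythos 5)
Your proof is correct and follows essentially the same route as the paper: Step 1 is identical (the full-degree case of Lemma~\ref{lem:exactness} plus $\chi_A(A_h)\circ\vec u^{(h)}=0$), and your Step 2 is the same Galerkin-orthogonality argument that the paper phrases as a vectorized least-squares problem solved via the normal equations, which you instead present as a direct Pythagorean splitting in the Frobenius inner product after placing $(R-R_A^G)(A)\circ\vec u$ in $\operatorname{range}(U)$ by the reduced-degree exactness. The difference is purely presentational; both arguments rest on the same two ingredients.
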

\begin{proof}
    In the following, we prove the corollary for $R_A^{G}(A)\circ \vec u$. The proof for $R_B^{G}(B)\circ \vec v$ is the same. 
    By Lemma~\ref{lem:exactness} it holds
    \begin{equation*}
        UU^HR_A^G(A)\circ \vec u=UR_A^G(A_h)\circ \vec{u}^{(h)}=0.
    \end{equation*}
    Since $U^HU=I_{bh}$, multiplying on the left by $U^H$ we obtain the first equivalence.

    The problem of minimizing $\norm{R(A)\circ \vec u}_F$ over all the $R(z)\in \pol_{h}(\C^{b\times b})/ Q_A(z)$ with monic numerator can be rewritten as
    \begin{align*}
    &\min_{\hat R(z)\in \pol_{h-1}(z)/Q_A(z)}\norm{Q_A(A)^{-1}A^h\vec u - \hat R(z)\circ \vec u}_F\\
    =&\min_{\vec y\in \C^{h\times b}}\norm{Q_A(A)^{-1}A^h\vec u-U\vec y}_F\\
    =&\min_{\vec y\in \C^{h\times b}}\norm{(I_b\otimes Q_A(A)^{-1}A^h)\text{vec}(\vec u)-(I_b\otimes U)\text{vec}(\vec y)}_2.
    \end{align*}
    The solution of the least square problem is given by the matrix $\vec y$ such that 
    \begin{equation*}
        (I_b\otimes U)^H\left((I_b\otimes Q_A(A)^{-1}A^h)\text{vec}(\vec u)-(I_b\otimes U)\text{vec}(\vec y)\right)=0,      
    \end{equation*}
    that is analogue to ask that $U^H(Q_A(A)^{-1}A^h\vec u-U\vec y)=0$, that is, the solution of the minimization problem sathisfies $U^H(R(A)\circ \vec u)=0$, hence the function $R_A^G(z)$ is the solution.
\end{proof}

Lemma \ref{lem:exactness} is usually referred as the exactness property of
rational Krylov spaces. The proof is a generalization of the ones for non-block
rational Krylov methods, which is described in
\cite[Lemma~4.6]{guttel2010rational}.

\begin{lemma} \label{lem:second}
    Let $R_A(z)\in \pol_h(\C^{b\times b})/Q_A(z)$, and $z$ such that $det(R_A(z))\neq 0$. Then, 
    \begin{equation}\label{eqn:thesis-second-lemma}
        R_A(zI_n)\circ^{-1}\left[
            R_A(zI_n) \circ \vec x - R_A(A) \circ \vec x
        \right] = UR_A(zI_{bh})\circ^{-1} \left[
            R_A(zI_{bh}) \circ \tilde{\vec x} - R_A(A_h) \circ \tilde{\vec x}
        \right], 
    \end{equation}
    where $\vec x := (zI_n - A)^{-1} \vec u$ and 
    $\tilde{\vec x} := (zI_{bh} - A_h)^{-1} \vec u^{(h)}$.    
    
    Similarly, for any $R_B(z)\in \pol_k(\C^{b\times b})/Q_B(z)$ and for each $z$ such that $det(R_B(z))\neq 0$
    \begin{equation*}
        R_B(zI_m)\circ^{-1}\left[
            R_B(zI_m) \circ \vec y - R_B(B^H) \circ \vec y
        \right] = VR_B(zI_{bk})\circ^{-1} \left[
            R_B(zI_{bk}) \circ \tilde{\vec y} - R_B(B_k) \circ \tilde{\vec y}
        \right], 
    \end{equation*}
    where $\vec y := (zI_m - B^H)^{-1} \vec v$ and 
    $\tilde{\vec y} := (zI_{bk} - B_k)^{-1} \vec v^{(k)}$.
\end{lemma}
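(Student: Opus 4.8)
The plan is to work with the $A$-side identity \eqref{eqn:thesis-second-lemma}; the $B^H$-side is entirely analogous. First I would expand the left-hand side. Since $R_A(zI_n)\circ^{-1}$ is a linear operator in its argument (by the Remark following the extension of $\circ^{-1}$ to rational matrices), I can split the bracket and treat $R_A(zI_n)\circ^{-1}\bigl[R_A(zI_n)\circ \vec x\bigr]$ and $R_A(zI_n)\circ^{-1}\bigl[R_A(A)\circ\vec x\bigr]$ separately. The first term simplifies to $\vec x$ outright, because $R_A(zI_n)\circ^{-1}$ and $R_A(zI_n)\circ$ are mutually inverse (this is immediate from the construction of $\circ^{-1}$ as the inverse operator, applied with the constant matrix $zI_n$ in place of $A$). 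So the left side equals $\vec x - R_A(zI_n)\circ^{-1}\bigl[R_A(A)\circ\vec x\bigr]$, and by the same reasoning the right side equals $U\tilde{\vec x} - UR_A(zI_{bh})\circ^{-1}\bigl[R_A(A_h)\circ\tilde{\vec x}\bigr]$. Thus the claim reduces to showing
\begin{equation*}
    \vec x - R_A(zI_n)\circ^{-1}\bigl[R_A(A)\circ\vec x\bigr]
    = U\tilde{\vec x} - UR_A(zI_{bh})\circ^{-1}\bigl[R_A(A_h)\circ\tilde{\vec x}\bigr].
\end{equation*}

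Next I would exploit the defining relation $(zI_n-A)\vec x = \vec u$, equivalently $\vec x = \vec u + A\vec x = \vec u + (zI_n - (zI_n - A))\vec x$. The key algebraic fact is that for a matrix polynomial $P(z)=\sum_i z^i\Gamma_i$ one has the "divided-difference"-type identity $P(zI_n)\circ \vec x - P(A)\circ \vec x = (zI_n-A)\,\vec z$ for some explicitly-constructed $\vec z \in \rat(A,\vec x,\cdots)$ lying in a polynomial Krylov space of one lower degree; concretely, $z^i\vec x - A^i\vec x = (zI_n-A)(z^{i-1}+z^{i-2}A+\dots+A^{i-1})\vec x$. Applying $(zI_n-A)^{-1}$, i.e. noting $(zI_n-A)^{-1}\vec u = \vec x$, one sees that $R_A(zI_n)\circ^{-1}[R_A(A)\circ \vec x]$ differs from $\vec x$ by a vector that is a polynomial-in-$A$ combination applied to $\vec u$, of degree at most $h-1$ after dividing out $Q_A$ — that is, an element of $\rat_h(A,\vec u,\boldsymbol\xi_h^{(A)})$. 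Then I would invoke the exactness property, Lemma~\ref{lem:exactness} (and its consequence that $UU^H$ acts as the identity on that space, while $UU^H$ intertwines the compressed operators), to transfer everything from $A$ acting on $\vec u$ to $A_h$ acting on $\vec u^{(h)}$. The cleanest route is: show both sides of the reduced identity equal $UU^H$ applied to the left side, using that the left side lives in $\rat_h(A,\vec u)$, and that $U^H$ of it matches the corresponding compressed quantity by the intertwining relation $U^H R_A(A)\circ\vec x$-type identities from Lemma~\ref{lem:exactness}.

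I expect the main obstacle to be bookkeeping the degrees carefully: one must verify that the vector by which $R_A(zI_n)\circ^{-1}[R_A(A)\circ\vec x]$ differs from $\vec x$ genuinely lies in $\rat_h(A,\vec u)$ (degree $\le h-1$ in the numerator over $Q_A$), so that the "for any $R_A \in \pol_h$" — not merely $\pol_{h-1}$ — version of exactness is what is needed, and that the denominator $Q_A$ cancels correctly via Lemma~\ref{rmk:commutativity} and Lemma~\ref{rmk:product-scalar-polynomial}. A subtlety is that $R_A$ itself has degree exactly $h$, so $R_A(A)\circ\vec x$ need not lie in the space, but $R_A(zI_n)\circ\vec x - R_A(A)\circ\vec x$ does after the $(zI_n-A)^{-1}$ cancellation with $\vec u$; isolating this cancellation and checking it is compatible with passing through $Q_A(A)^{-1}$ (using commutativity of $Q_A(A)$ with powers of $A$, and $U^H Q_A(A)^{-1}\vec u = Q_A(A_h)^{-1}\vec u^{(h)}$ from the proof of Lemma~\ref{lem:exactness}) is where the care is required. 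Once the degree accounting is pinned down, the transfer to the compressed matrices is a direct application of Lemma~\ref{lem:exactness}, and the $B^H$-side follows verbatim with $B^H,V,B_k,\vec v,\vec v^{(k)},Q_B,R_B$ in place of the $A$-quantities.
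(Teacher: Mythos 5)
Your plan is correct and follows essentially the same route as the paper's proof: the core step in both is that $R_A(z)-R_A(x)$ is left-divisible by $(x-z)I_b$, so after cancelling $(zI_n-A)^{-1}\vec u=\vec x$ the quantity in question becomes $\tilde G_z(A)\circ\vec u$ for a rational matrix with numerator degree at most $h-1$, to which the exact (unprojected, $\pol_{h-1}$) case of Lemma~\ref{lem:exactness} applies. The only differences are presentational — you peel off $\vec x$ by linearity and use the explicit geometric-sum factorization of $z^iI_n-A^i$, whereas the paper first right-multiplies by $R_A(z)$ via Lemma~\ref{rmk:commutativity2} and invokes the solvent/left-division theorem — and your degree bookkeeping correctly identifies that it is the $\pol_{h-1}$ exactness statement that ultimately gets used.
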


\begin{proof}
    We only derive the first equality, the second follows by an analogous argument. 
    Note that $R_A(zI_n)\circ^{-1}$ is well-defined since the fact that
    $\det(R_A(z))\neq 0$. By Lemma~\ref{rmk:commutativity2} equation
    \eqref{eqn:thesis-second-lemma} is equivalent to
    \begin{equation*}
       [ R_A(zI_n) \circ \vec x - R_A(A) \circ \vec x](R(z))^{-1}
   =
        U\left[R_A(zI_{bh}) \circ \tilde{\vec x} - R_A(A_h) \circ \tilde{\vec x}\right](R(z))^{-1},
\end{equation*}
hence, multiplying both sides on the right by $R(z)$, it is sufficient to prove
     \begin{equation*}
            R_A(zI_n) \circ \vec x - R_A(A) \circ \vec x
       =
            U\left[R_A(zI_{bh}) \circ \tilde{\vec x} - R_A(A_h) \circ \tilde{\vec x}\right]
       .
    \end{equation*}
    Since $A$ and  $(zI_n - A)^{-1}$ commute and analogously for 
    $(zI_{bh} - A_h)^{-1}$ and $A_{h}$, by Lemma~\ref{rmk:commutativity} the claim can be equivalently restated as follows:
    \begin{equation}\label{eqn:first-step}
        (zI_n - A)^{-1} \left[
            R_A(zI_n) \circ \vec u - R_A(A) \circ \vec u
        \right] = U (zI_{bh} - A_h )^{-1} \left[
            R_A(zI_{bh}) \circ \vec u^{(h)} - R_A(A_h) \circ \vec u^{(h)}
        \right].   
    \end{equation}
    To prove it, we introduce the auxiliary function $G_z(x) := R_A(z) -
    R_A(x)$. We consider $G_z(x)$ as a function in the variable $x$, and assume
    that $z$ is fixed; in particular $G_z(x)=P_z(x)/Q_A(x)$, where $P_z(x)$ is a
    matrix polynomial of degree $h$ in the variable $x$. Note that  $G_z(A)\circ
    \vec u= R_A(zI_n)\circ \vec u-R_A(A)\circ \vec u$; indeed, letting
    $P_z(x)=\sum_{i=0}^h \Delta_ix^i\in \pol_{h}(\C^{b\times b})$ and
    $Q_A(x)=\sum_{i=0}^hq_ix^i$, from the definition of $G_z(x)$ we have that
    \begin{equation*}
        R_A(x)=R_A(z)-G_z(x)=(Q_A(x)R_A(z)-P_z(x))/Q_A(x)=\left[\sum_{i=0}^h (q_iR_A(z)-\Delta_i)x^i\right]/Q_A(x).
    \end{equation*}
    Hence,
    \begin{align*}
        R_A(A)\circ \vec u &= Q_A(A)^{-1} \left[\sum_{i=0}^h A^i\vec u (q_iR_A(z)-\Delta_i)\right]\\
        &= Q_A(A)^{-1} \left[R_A(z)\sum_{i=0}^h q_iA^i\vec u \right]-Q_A(A)^{-1} \left[\sum_{i=0}^h A^i\vec u\Delta_i\right]\\
        &= R_A(z)Q_A(A)^{-1}Q_A(A) \vec u - G_z(A)\circ \vec u 
        =R_A(zI_n)\circ \vec u -G_z(A)\circ \vec u.
    \end{align*}
    Analogously, it can be proven that $G_z(A_h)\circ \vec u^{(h)}=
    R_A(zI_{bh})\circ \vec u^{(h)}-R_A(A_h)\circ \vec u^{(h)}$. Using the
    equivalences introduced before, we may rewrite \eqref{eqn:first-step} as
\begin{equation}\label{eqn:first-step-bis}
    (zI_n-A)^{-1}G_z(A)\circ \vec u= U(zI_{bh}-A_h)^{-1}G_z(A_h)\circ\vec u^{(h)}.
\end{equation}
By definition, evaluating $G_z(x)$ 
    at $x = zI_b$ yields 
    $
        G_z(zI_b) = R_A(z) - R_A(zI_b) = 0. 
    $
    This implies that the linear matrix polynomial $(xI_b - zI_b)$ is a 
    left solvent for $P_z(x)$, and we may write 
    \begin{equation*}
        \tilde G_z(x) := 
        (z - x)^{-1} G_z(x) = 
        -(xI_b - zI_b)^{-1} G_z(x) \in \pol_{h-1}(\C^{b\times b})/Q_A(x). 
\end{equation*}    
    Thanks to the exactness from Lemma~\ref{lem:exactness} we have $       
    \tilde G_z(A) \circ \vec u =
        U  \tilde G_z(A_h) \circ \vec u^{(h)},
    $
    that by Lemma~\ref{rmk:product-scalar-polynomial} is equal to \eqref{eqn:first-step-bis}, concluding the proof.
\end{proof}

\begin{lemma}\label{lem:third}
    Let $\chi_A(z)\in\pol_h(\C^{b\times b})$ and $\chi_{B}(z)\in\pol_k(\C^{b\times b})$ be block characteristic polynomial for $A_h$ with respect to $\vec u^{(h)}$ and $B_k$ with respect to $\vec v^{(k)}$, respectively. Let $R_A^G(z)=\chi_A(z)/Q_A(z)$ and $R_B^G(z)=\chi_B(z)/Q_B(z)$. We have that
    \begin{equation*}
        (zI_n-A)^{-1}\vec u-U(zI_{bh}-A_h)^{-1}\vec u^{(h)}=R_A^G(zI_n)\circ^{-1}R_A^G(A)\circ(zI_n-A)^{-1}\vec u,
    \end{equation*}
    and
    \begin{equation*}
        (zI_m-B^H)^{-1}\vec v-U(zI_{bk}-B_k)^{-1}\vec v^{(k)}=R_B^G(zI_m)\circ^{-1}R_B^G(B^H)\circ(zI_m-B^H)^{-1}\vec v,
    \end{equation*}
\end{lemma}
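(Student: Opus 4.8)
The plan is to obtain the identity by specializing Lemma~\ref{lem:second} to the particular rational matrix $R_A = R_A^G$ and then collapsing both sides using the fact that $R_A^G$ comes from a block characteristic polynomial. First I would set $\vec x := (zI_n - A)^{-1}\vec u$ and $\tilde{\vec x} := (zI_{bh} - A_h)^{-1}\vec u^{(h)}$, so that the left-hand side of the claimed identity is precisely $\vec x - U\tilde{\vec x}$. I would also record that the whole argument is for $z$ with $\det(R_A^G(z)) \neq 0$ (and $z \notin \Lambda(A) \cup \Lambda(A_h)$), which is exactly what makes $R_A^G(zI_n)\circ^{-1}$ and $R_A^G(zI_{bh})\circ^{-1}$ well defined; these are the values of $z$ for which the statement makes sense.

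The one genuine preliminary step is to show $R_A^G(A_h)\circ\tilde{\vec x} = 0$. Since $\chi_A$ is a block characteristic polynomial of $A_h$ with respect to $\vec u^{(h)}$, Definition~\ref{def:charact_poly} gives $\chi_A(A_h)\circ\vec u^{(h)} = 0$, and hence $R_A^G(A_h)\circ\vec u^{(h)} = Q_A(A_h)^{-1}(\chi_A(A_h)\circ\vec u^{(h)}) = 0$. Because $A_h$ commutes with $(zI_{bh}-A_h)^{-1}$, Lemma~\ref{rmk:commutativity} then lets me pull the resolvent through the operator, so $R_A^G(A_h)\circ\tilde{\vec x} = (zI_{bh}-A_h)^{-1}(R_A^G(A_h)\circ\vec u^{(h)}) = 0$.

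Finally I would apply Lemma~\ref{lem:second} with $R_A = R_A^G$, $\vec x$, and $\tilde{\vec x}$ as above. On the left-hand side of \eqref{eqn:thesis-second-lemma}, using that $\circ^{-1}$ inverts $\circ$, the term $R_A^G(zI_n)\circ^{-1}(R_A^G(zI_n)\circ\vec x)$ is just $\vec x$, so the left-hand side becomes $\vec x - R_A^G(zI_n)\circ^{-1}(R_A^G(A)\circ\vec x)$. On the right-hand side, the term $R_A^G(A_h)\circ\tilde{\vec x}$ vanishes by the previous step, leaving $U R_A^G(zI_{bh})\circ^{-1}(R_A^G(zI_{bh})\circ\tilde{\vec x}) = U\tilde{\vec x}$. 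Equating and rearranging gives $\vec x - U\tilde{\vec x} = R_A^G(zI_n)\circ^{-1}(R_A^G(A)\circ\vec x)$, which is the claim; the second identity follows verbatim with $B^H, V, B_k, \vec v^{(k)}$ replacing $A, U, A_h, \vec u^{(h)}$. I do not expect any serious obstacle here: Lemma~\ref{lem:second} already carries all the analytic content, and the only things to be careful about are the bookkeeping of which $\circ^{-1}$ operators are defined and the short verification that $R_A^G(A_h)\circ\tilde{\vec x} = 0$.
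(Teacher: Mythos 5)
Your proposal is correct and follows the same route as the paper, which proves the lemma in one line by applying Lemma~\ref{lem:second} with $R_A=R_A^G$ and noting $R_A^G(A_h)\circ\vec u^{(h)}=0$; you merely spell out the commutativity step that transfers this to $R_A^G(A_h)\circ\tilde{\vec x}=0$ and the cancellation $R_A^G(zI)\circ^{-1}\bigl(R_A^G(zI)\circ\cdot\bigr)=\mathrm{id}$, both of which are implicit in the paper's argument.
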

\begin{proof}
    It follows from Lemma~\ref{lem:second} observing that $R_A^G(A_h)\vec u^{(h)}=0$ and $R_B^G(B_k)\vec v^{(k)}=0$.
\end{proof}

We are now ready to give the proof of Theorem~\ref{thm:block-residual}:
\begin{proof}[Proof of Theorem~\ref{thm:block-residual}]
    To simplify the notation we define $\vec x=(zI_n-A)^{-1}\vec u$, $\tilde {\vec x} =(zI_{bh}-A_h)^{-1}\vec u^{(h)}$, $\vec y=(\bar{z}I_m-B^H)^{-1}\vec v$ and  $\tilde {\vec y} =(\bar{z}I_{bk}-B_k)^{-1}\vec v^{(k)}$. According to 
    Equation~\eqref{thm:solution-sylv}, letting $X$ the solution of the Sylvester equation, we have
    \begin{equation*}
        X-X_{h,k}=\frac{1}{2\pi i}\int_{\gamma_A}\vec x \vec y^H-U\tilde{\vec x} \tilde{\vec y}^HV^H dz,
    \end{equation*}
    where $\gamma_A$ is a compact contour with positive orientation
    that encloses 
    the eigenvalues of $A$ and $A_h$, but not the
    eigenvalues of $B$ and $B_k$. Using Lemma~\ref{lem:third} we have
    \begin{align}
        X-X_{h,k}&=\frac{1}{2\pi i}\int_{\gamma_A}\left((\vec x-U\tilde{\vec x})\vec y^H+\vec x(\vec y-V\tilde{\vec y})^H-(\vec x-U\tilde{\vec x})(\vec y-V\tilde{\vec y})^H\right)dz\\
        &\label{enq:integral1}=\frac{1}{2\pi i} \int_{\gamma_A} \left(R_A^G(zI_n)\circ^{-1}R_A^G(A)\circ\vec x\right) \vec{y}^H dz\\
        &\label{enq:integral2}+\frac{1}{2\pi i} \int_{\gamma_A}\vec x\left(R_B^G(\bar{z}I_m)\circ^{-1}R_B^G(B^H)\circ\vec y\right)^H dz\\
        &\label{enq:integral3}-\frac{1}{2\pi i} \int_{\gamma_A} \left(R_A^G(zI_n)\circ^{-1}R_A^G(A)\circ\vec x\right)\left(R_B^G(\bar{z}I_m)\circ^{-1}R_B^G(B^H)\circ\vec y\right)^H dz.
    \end{align}
    The residual matrix can be written as $R_{h,k}=A(X-X_{h,k})-(X-X_{h,k})B$
    that is the sum of the three differences of integrals
    $A\mathcal{S}-\mathcal{S}B$, where $\mathcal S$ is substituted by
    \eqref{enq:integral1}, \eqref{enq:integral2} and \eqref{enq:integral3}. In
    the following, we study each difference of integrals separately.
    Concerning~\eqref{enq:integral1}, by Lemma~\ref{rmk:commutativity} we have

    \begin{align}
        & {\phantom{=}} \frac{1}{2\pi i}A \int_{\gamma_A} \left(R_A^G(zI_n)\circ^{-1}R_A^G(A)\circ\vec x\right) \vec{y}^H dz- 
            \frac{1}{2\pi i} \int_{\gamma_A} \left(R_A^G(zI_n)\circ^{-1}R_A^G(A)\circ\vec x\right) \vec{y}^HB dz\\
        &= \label{eqn:couple-integral1}\frac{1}{2\pi i} \int_{\gamma_A} \left(R_A^G(zI_n)\circ^{-1}R_A^G(A)\circ A\vec x\right) \vec{y}^H dz-   
            \frac{1}{2\pi i} \int_{\gamma_A} \left(R_A^G(zI_n)\circ^{-1}R_A^G(A)\circ\vec x\right) (B^H\vec{y})^H dz.
    \end{align}
    Let now $\gamma_B$ be a positively oriented compact contour that encloses
    the eigenvalues of $B$ and $B_k$, but not the eigenvalues of $A$ and $A_h$.
    Since the integrand is $\mathcal{O}(z^{-2})_{z\rightarrow \infty}$, we can
    replace $\gamma_A$ with $\gamma_B$ just by changing the sign of the
    integral.
    Noting that 
    \begin{equation} \label{eqn:relations-Ax-By}
    A\vec x=(A-zI_n)\vec x +zI_n\vec x=-\vec u +z \vec x  \quad \text{ and analogously,  } \quad B^H\vec y=-\vec v+\bar{z}\vec y,
    \end{equation}
    the sum of integrals in \eqref{eqn:couple-integral1} can be rewritten as 
    \begin{equation*}
        -\frac{1}{2\pi i} \int_{\gamma_A} \left(R_A^G(zI_n)\circ^{-1}R_A^G(A)\circ \vec u\right) \vec{y}^H dz+  
            \frac{1}{2\pi i} \int_{\gamma_A} \left(R_A^G(zI_n)\circ^{-1}R_A^G(A)\circ\vec x\right) \vec{v}^H dz.
    \end{equation*}
    Then, changing $\gamma_A$ with $\gamma_B$ we obtain 
    \begin{equation}\label{eqn:first-residual}
        \frac{1}{2\pi i} \int_{\gamma_B} \left(R_A^G(zI_n)\circ^{-1}R_A^G(A)\circ \vec u\right) \vec{y}^H dz,
    \end{equation}
    since the integral
    \begin{equation*}        
            \frac{1}{2\pi i} \int_{\gamma_B} \left(R_A^G(zI_n)\circ^{-1}R_A^G(A)\circ\vec x\right) \vec{v}^H dz ,           
    \end{equation*}
    vanishes for the residual theorem.

    The same technique can be used to write the second difference of integrals as
    \begin{equation}\label{eqn:second-residual}
        \frac{1}{2\pi i} \int_{\gamma_A} \vec x\left(R_B^G(\bar{z}I_m)\circ^{-1}R_B^G(B^H)\circ \vec v\right)^Hdz.
    \end{equation}
    
     Using again the relations in \eqref{eqn:relations-Ax-By}, the third difference of integrals can be written as $ I_{3,1}+I_{3,2}$, where

     \begin{equation*}
        I_{3,1}=\frac{1}{2\pi i} \int_{\gamma_A} \left(R_A^G(zI_n)\circ^{-1}R_A^G(A)\circ \vec u\right) \left(R_B^G(\bar{z}I_m)\circ^{-1}R_B^G(B^H)\circ \vec y\right)^H dz,
     \end{equation*}
     and
     \begin{equation}\label{eqn:third-residual}
        I_{3,2}= -\frac{1}{2\pi i} \int_{\gamma_A} \left(R_A^G(zI_n)\circ^{-1}R_A^G(A)\circ \vec x\right) \left(R_B^G(\bar{z}I_m)\circ^{-1}R_B^G(B^H)\circ \vec v\right)^H dz.
    \end{equation}
    For a generic choice of poles, it is only guaranteed that the integrand of
    $I_{3,1}$ is $\mathcal{O}(z^{-1})_{z\rightarrow \infty}$ hence, changing
    $\gamma_A$ with $\gamma_B$, we can rewrite $I_{3,1}$ as 
    \begin{align}
                &\label{eqn:fourth-residual}\left(R_A^G(\infty \cdot I_n)\circ^{-1}R_A^G(A)\circ \vec u\right) \left(R_B^G(\infty\cdot I_m)\circ^{-1}R_B^G(B^H)\circ \vec v\right)^H\\
                &\label{eqn:fifth-residual}-\frac{1}{2\pi i} \int_{\gamma_B} \left(R_A^G(zI_n)\circ^{-1}R_A^G(A)\circ \vec u\right) \left(R_B^G(\bar{z}I_m)\circ^{-1}R_B^G(B^H)\circ \vec y\right)^H dz.
    \end{align}    
    Summing \eqref{eqn:first-residual}, \eqref{eqn:second-residual}, \eqref{eqn:third-residual}, \eqref{eqn:fourth-residual} and \eqref{eqn:fifth-residual}, we obtain
    \begin{align*}
        R_{h,k}&=\left(R_A^G(\infty \cdot I_n)\circ^{-1}R_A^G(A)\circ \vec u\right) \left(R_B^G(\infty\cdot I_m)\circ^{-1}R_B^G(B^H)\circ \vec v\right)^H\\
        &+\frac{1}{2\pi i} \int_{\gamma_B} \left(R_A^G(zI_n)\circ^{-1}R_A^G(A)\circ \vec u\right) \left( \left(I_m-R_B^G(\bar{z}I_m)\circ^{-1}R_B^G(B^H)\right)\circ \vec y\right)^H dz\\
        &+\frac{1}{2\pi i} \int_{\gamma_A} \left(\left(I_n-R_A^G(zI_n)\circ^{-1}R_A^G(A)\right)\circ \vec x\right) \left(R_B^G(\bar{z}I_m)\circ^{-1}R_B^G(B^H)\circ \vec v\right)^H dz.
    \end{align*}
    Applying Lemma \ref{lem:third}, we have 
    \begin{align*}
        R_{h,k}&=\left(R_A^G(\infty \cdot I_n)\circ^{-1}R_A^G(A)\circ \vec u\right) \left(R_B^G(\infty\cdot I_m)\circ^{-1}R_B^G(B^H)\circ \vec v\right)^H\\
        &+\frac{1}{2\pi i} \int_{\gamma_B} \left(R_A^G(zI_n)\circ^{-1}R_A^G(A)\circ \vec u\right)  \tilde{\vec y}^HV^H dz\\
        &+\frac{1}{2\pi i} \int_{\gamma_A} U \tilde{\vec x} \left(R_B^G(\bar{z}I_m)\circ^{-1}R_B^G(B^H)\circ \vec v\right)^H dz,
    \end{align*}
    and thanks to Lemma~\ref{rmk:commutativity2} the above term can be rewritten as
    \begin{align*}
        R_{h,k}&=\left(R_A^G(A)\circ \vec u\left(R_A^G(\infty)\right)^{-1}\right) \left(R_B^G(B^H)\circ \vec v\left(R_B^G(\infty)\right)^{-1}\right)^H\\
        &+\frac{1}{2\pi i} \int_{\gamma_B} \left(R_A^G(A)\circ \vec u\right)  \left({R_A^G}^H(\bar{z}I_{bk})\circ^{-1} \tilde{\vec y}\right)^HV^H dz\\
        &+\frac{1}{2\pi i} \int_{\gamma_A}U \left({R_B^G}^H(zI_{bh})\circ^{-1} \tilde{\vec x}\right) \left(R_B^G(B^H)\circ \vec v\right)^H dz.
    \end{align*}
    Finally, by Theorem~\ref{thm:generalized-cauchy} we have
    \begin{align*}
        R_{h,k}&=\left(R_A^G(A)\circ \vec u\left(R_A^G(\infty)\right)^{-1}\right) \left(R_B^G(B^H)\circ \vec v\left(R_B^G(\infty)\right)^{-1}\right)^H\\
        &+\left(R_A^G(A)\circ \vec u\right)  \left({R_A^G}^H(B_k)\circ^{-1} \vec v^{(k)}\right)^HV^H \\
        &+U \left({R_B^G}^H(A_h)\circ^{-1} \vec u^{(h)}\right) \left(R_B^G(B^H)\circ \vec v\right)^H .
    \end{align*}
    To prove \eqref{eqn:redidual-norm-sum} consider the orthogonal projectors 
    $\Pi_A=UU^H$ and $\Pi_B=VV^H$. Applying 
    Corollary~\ref{cor:char-poly} we obtain the sought identities
    \begin{align*}
        \Pi_AR_{h,k}(I_{bk}-\Pi_B)=\rho_{1,2}, &\quad (I_{bh}-\Pi_A)R_{h,k}\Pi_B=\rho_{2,1} \\
        \text{ and } \quad  (I_{bh}-\Pi_A)&R_{h,k}(I_{bk}-\Pi_B)=\rho_{2,2}. 
    \end{align*}
\end{proof}
\subsection{Pole selection}\label{sec:pole-selection} The results of
Theorem~\ref{thm:block-residual} can be used to adaptively find good poles for
the block rational Arnoldi algorithm \ref{algorithm:block-Arnoldi} for the resolution of Sylvester equations.

During this discussion we assume that one of the poles in $\boldsymbol{\xi}_A$ or $\boldsymbol{\xi}_B$ is chosen equal to infinity, hence for Remark~\ref{rmk:infinity-pole} the term $\rho_{2,2}$ in the formulation of the residual is equal to zero. With this assumption, the norm of the residual is monitored by the norms of
\begin{equation*}
    \rho_{1,2}=U({R_B^G}^H(A_h)\circ^{-1}\vec u^{(h)})(R_B^G(B^H)\circ \vec v)^H,
\end{equation*}
and
\begin{equation*}
\rho_{2,1}=\left(R_A^G(A)\circ\vec u\right)({R_A^G}^H(B_k)\circ^{-1} \vec v^{(k)})^HV^H.
\end{equation*}
Let us start by considering the norm of $\rho_{1,2}$. We have that
\begin{equation*}
    \norm{\rho_{1,2}}_F\le \norm{{R_B^G}^H(A_h)\circ^{-1}\vec u^{(h)}}_F\cdot\norm{R_B^G(B^H)\circ \vec v}_F.
\end{equation*}
By Corollary~\ref{cor:char-poly}, the vector $R_B^G(B^H)\circ \vec v$ minimizes $\norm{R(B^H)\circ \vec v}_F$ over all $R(z)\in \pol_{h}(\C^{b\times b})/Q_B(z)$ with monic numerator, for this reason, we choose the new pole by minimizing the norm of ${R_B^G}^H(A_h)\circ^{-1}\vec u^{(h)}$.

Let $\chi_B(z)=\sum_{i=0}^k\Gamma_iz^i$ be monic block characteristic polynomial of $B_k$. By the definition of the operator $\circ^{-1}$, we have 
\begin{equation*}
    \norm{{R_B^G}^H(A_h)\circ^{-1}\vec u^{(h)}}_F=\norm{(I_b\otimes \bar{Q}_B(A_h))\left(\sum_{i=0}^k\bar{\Gamma}_i\otimes A_h^i\right)^{-1}\text{vec}(\vec{v})}_2,
\end{equation*}
where $\bar{Q}_B(z)$ is the conjugate of $Q_B(z)$ and $\bar{\Gamma}_i$ denotes the conjugate of the matrix $\Gamma_i$.

Assuming for simplicity that $A_h$ is diagonalizable, i.e., $A_h=Z_hD_h{Z_h}^{-1}$ with $D_h$ diagonal matrix, we have the following bound:
\begin{equation*}
    \norm{(I_b\otimes \bar{Q}_B(A_h))\left(\sum_{i=0}^k\bar{\Gamma}_i\otimes A_h^i\right)^{-1}\text{vec}(\vec{v})}_2\le \kappa(Z_h)\norm{\vec v}_F\norm{(I_b\otimes \bar{Q}_B(D_h))\left(\sum_{i=0}^k\bar{\Gamma}_i\otimes D_h^i\right)^{-1}}_2,
\end{equation*}
where $\kappa(Z_h)$ denotes the condition number of $Z_h$. The two norm of $(I_b\otimes \bar{Q}_B(D_h))(\sum_{i=0}^k\bar{\Gamma}_i\otimes D_h^i)^{-1}$ is equal to the two norm of the matrix 
\begin{equation*}
    (\bar{Q}_B(D_h)\otimes I_b)(\sum_{i=0}^k D_h^i\otimes\bar{\Gamma}_i)^{-1}=
        \begin{bmatrix}\bar{R}_B^{-1}(\lambda_1)\\&\ddots \\ && \bar{R}_B^{-1}(\lambda_h)        
    \end{bmatrix},
\end{equation*}
where $\bar{R}_B(z)=\bar{\chi}_B(z)/\bar{Q}_B(z)=(\sum_{i=0}^hz^i\bar{\Gamma}_i)/\bar{Q}_B(z)$ and $\lambda_1,\dots, \lambda_h$ are the eigenvalues of $A_h$. In particular
\begin{equation*}
    \norm{(I_b\otimes \bar{Q}_B(D_h))\left(\sum_{i=0}^k\bar{\Gamma}_i\otimes D_h^i\right)^{-1}}_2=\max_{i=1,\dots, h}\norm{\bar{R}_B^{-1}(\lambda_i)}_2.
\end{equation*}

This shows that keeping the function $\norm{\bar{R}_B^{-1}(z)}_2$ 
small over the 
eigenvalues of $A_h$ guarantees a small norm for $\rho_{1,2}$. 
In order to obtain a condition independent of $h$, we can ask 
for $\norm{\bar{R}_B^{-1}(z)}_2$ to be small on the field of 
values of $A$, which encloses the spectra of all $A_h$. 

In the following, we describe practical methods to adaptively choose poles for $\boldsymbol{\xi}_B$. The same techniques can be used to provide poles for $\boldsymbol{\xi}_A.$

Let us assume to know the matrix $B_{k-1}$ obtained after $k-1$ steps of the block rational Arnoldi algorithm \ref{algorithm:block-Arnoldi} with poles $\boldsymbol{\xi}_{k-1}$ and that we want to choose a new pole to perform the next step of the algorithm. As we saw before the norm of $\rho_{1,2}$ after the $k$-th step can be monitored by 
\begin{equation}\label{eqn:norm_Rlambda}
    \norm{\bar{R}_B^{-1}(\lambda)}_2=|\lambda-\bar{\xi}_{k}|\cdot\norm{\bar{\chi}_k(\lambda)^{-1}\bar{Q}_{k-1}(\lambda)}_2,
\end{equation}
for $\lambda \in \W(A)$, where $Q_{k-1}(z)=\prod_{\xi\in \boldsymbol{\xi}_{k-1}, \xi\neq \infty}z-{\xi}$ and $\chi_k(z)$ is the block characteristic polynomial of $B_k$. In practice we assume that the block characteristic polynomial of $B_{k-1}$, say $\chi_{k-1}(z)$, well approximates $\chi_{k}(z)$ over $\W(A)$, hence we approximate \eqref{eqn:norm_Rlambda}, by  
\begin{equation}\label{eqn:norm_Rlambda2}
    |\lambda-\bar{\xi}_{k}|\cdot\norm{\bar{\chi}_{k-1}(\lambda)^{-1}\bar{Q}_{k-1}(\lambda)}_2.
\end{equation}
To keep \eqref{eqn:norm_Rlambda2} small over $\W(A)$ we can choose $\xi_k$ as the conjugate of
\begin{equation*}
    \arg\max_{\lambda\in \W(A)} \norm{\bar{\chi}_k(\lambda)^{-1}\bar{Q}_{k-1}(\lambda)}_2.
\end{equation*}

\begin{remark}
    If $\W(A)$ has a nonempty interior, for the maximum modulus principle it is sufficient to maximize the function over its boundary.
\end{remark}
\begin{remark}
    In the case of classical rational block Krylov, i.e., $b=1$ for the resolution of Lyapunov equations, that is $B=-A$, this result reduces to the choice of poles developed in \cite{druskin2010adaptive} for the case of $A$ symmetric and in \cite{druskin2011adaptive} for generical $A$.
\end{remark}

The numerical computation of $\xi_k$ using the definition of block
characteristic polynomial given by Theorem~\refeq{thm:blk-char-poly}, is often
inaccurate, because the condition number of the matrices $W_i$, is often large. This problem can be overcome
by developing an alternative way to compute the norm of the evaluation of block
characteristic polynomials. We leave this for future research since the result
is beyond the purpose of this work.

We now provide two methods to monitor the Euclidean norm of
$\bar{\chi}_{k-1}(\lambda)^{-1}\bar{Q}_{k-1}(\lambda)$ avoiding an explicit
computation,  noting that it equals to $1/\sigma_{\min}(\lambda)$, where
$\sigma_{\min}(\lambda)$ is the minimum singular value of
$\bar{\chi}_{k-1}(\lambda)/\bar{Q}_{k-1}(\lambda)$.

The first method is to approximate the maximizer of
${1}/{\sigma_{min}}(\lambda)$, for $\lambda \in \W(A)$, with the maximizer of
the inverse of $|\det(\bar{\chi}_{k-1}(\lambda)/\bar{Q}_{k-1}(\lambda))|$ since
the absolute value of the determinant is the product of all the singular values.
From Theorem~\ref{thm:blk-char-poly} it can be noticed that 
\begin{equation*}
\det(\bar{\chi}_{k-1}(\lambda))=\prod_{\mu\in\Lambda(B_{k-1})}(\lambda-\bar{\mu}),
\end{equation*}
hence the choice of the new pole reduces to the conjugate of
\begin{equation}\label{eqn:det}
    \arg\max_{\lambda\in \W(A)} \frac{\prod_{\xi\in \boldsymbol{\xi}_{k-1}, \xi\neq \infty}|\lambda-\bar{\xi}|^b}{\prod_{\mu\in\Lambda(B_{k-1})}|\lambda-\bar{\mu}|}.
\end{equation}

We refer to this pole selection strategy as \emph{Adaptive Determinat Minimizaztion} ({\tt{ADM}}). 

\begin{remark}
    In the case of the solution of Lyapunov equations, this choice of poles has
    already been suggested in \cite{druskin2011adaptive} as a possible
    generalization of the technique developed for non-block rational Krylov
    methods. This result produces a theoretical justification of such
    generalization and an extension to the resolution of Sylvester equations.
\end{remark}

To introduce the second method assume $B_{k-1}$ diagonalizable. In such case, if we let \begin{equation*}
    \chi_{k-1}(z)=\prod_{i=1}^{k-1}(zI_b-S_i),
\end{equation*}
as described in Theorem~\refeq{thm:blk-char-poly}, also the matrices $S_i$ are diagonalizable, hence
\begin{equation}\label{eqn:bound_det2}
    \begin{aligned}
    \norm{\bar{\chi}_{k-1}(\lambda)^{-1}\bar{Q}_{k-1}(\lambda)}_2&\le |\bar{Q}_{k-1}(\lambda)|\prod_{i=1}^{k-1}\norm{(\lambda I_b-\bar{S}_i)^{-1}}_2\\
    &\le |\bar{Q}_{k-1}(\lambda)|\prod_{i=1}^{k-1}\frac{\kappa(X_i)}{|\Lambda_{\min}(\lambda-\bar{S}_i)|},
\end{aligned}
\end{equation}
where $X_i$ is the matrix of eigenvectors of $\bar{S}_i$ and
$\Lambda_{\min}(\lambda-\bar{S}_i)$ denotes the smallest modulus eigenvalue of
$\lambda-\bar{S}_i$ for each $i$. From Theorem~\ref{thm:blk-char-poly} we see
that the matrices $S_i$ can be recovered by an arbitrary eigendecomposition of
the matrix $B_{k-1}$, in particular for a fixed $\lambda$ we can construct $S_i$
using an ordered eigendecomposition of $B_{k-1}$, where the eigenvalues
\{$\mu_i$\} of $B_{k-1}$ are ordered such that
$|\bar{\lambda}-\mu_1|\le|\bar{\lambda}-\mu_2|\le\dots\le|\bar{\lambda}-\mu_{k-1}|$.
With this construction the eigenvalues of $S_i$ are
$\mu_{(i-1)b+1},\mu_{(i-1)b+2},\dots,\mu_{ib}$ and \eqref{eqn:bound_det2} can be
rewritten as
\begin{equation*}
    \norm{\bar{\chi}_B(\lambda)^{-1}\bar{Q}_{k-1}(\lambda)}_2\le\left(\prod_{i=1}^{k-1}\kappa(X_i) \right) |\bar{Q}_B(\lambda)| \left(\prod_{i=1}^{k-1}(|\lambda-\bar{\mu}_{(i-1)b+1}|)^{-1}\right).
\end{equation*}
This suggests a new method to choose the next shift: $\xi_k$ can be taken as the
conjugate of
\begin{equation}\label{eqn:det2}
    \arg \max_{\lambda \in\W(A)}\left(\prod_{\xi\in\boldsymbol{\xi}_B, \xi\neq \infty}|\lambda-\bar{\xi}|\prod_{i=1}^{k-1}(|\lambda-\bar{\mu}_{(i-1)b+1}|)^{-1}\right),
\end{equation}
where $\mu_i$ are the eigenvalues of $B_{k-1}$ ordered as described before. 

We refer to this pole selection strategy as \emph{subsampled Adaptive Determinat Minimizaztion} ({\tt{sADM}}). 

\begin{remark}
    The main advantage of this choice of poles with respect to the previous one
    is that we have to maximize a rational function with a much smaller degree.
\end{remark}
\section{Numerical experiments} \label{sec:numerical} In this section we provide
some numerical experiment to show the convergence of the block rational Arnoldi
algorithm \ref{algorithm:block-Arnoldi} using poles determined in
Section~\ref{sec:pole-selection}: throughout the section, the algorithms that
chooses poles accordingly to \eqref{eqn:det} and \eqref{eqn:det2} are denoted by
{\tt{ADM}} and {\tt{sADM}}, respectively. The pole $\xi_0$ is always chosen
equal to infinity, and the techniques developed in
Section~\refeq{sec:swap-poles} are employed to guarantee the last pole equal to
infinity at each step. This allows computing the residual as described in
Section~\ref{sec:Krylov-Sylvester} avoiding extra computational costs. The implementation of block rational Arnoldi algorithms is based on the {\tt{rktoolbox}} for Matlab, developed in \cite{berljafa2014rational}.

The numerical simulations have been run on a Intel(R) Core(TM) i5-8250U CPU processor running Ubuntu and MATLAB R2022b.

The
experiments only involve real matrices hence, if a nonreal pole is employed, the
subsequent is chosen as its conjugate, this allows us to avoid complex matrices.
We refer the reader to \cite{ruhe1994rational2} for a more complete discussion.

In the first experiment, we compute the approximate solution of the Poisson equation 
\begin{equation*}
    \begin{cases}
        -\Delta u = f &\text{ in }\Omega\\
        u\equiv 0 &\text{ on }\partial\Omega
    \end{cases}, \qquad \Omega= [0, 1]^2.
\end{equation*}

We discretize the domain with a uniformly spaced grid with $n=4096$ points 
in each direction, and the operator $\Delta$ by finite differences, which 
yields the Lyapunov equation
\begin{equation*}
    AX+XA=F, \quad \text{ with } \quad
    A=\frac{1}{h^2}\begin{bmatrix}
        -2&1\\
        1&-2&\ddots\\
        &\ddots&\ddots&1\\
        &&1&-2
    \end{bmatrix}
\end{equation*}
where $h=\frac{1}{n-1}$ is the distance between the grid points and $F$ is the matrix obtained evaluating $f$ on the grid points. If the function $f$ is a smooth bivariate function, the matrix $F$ is numerically low-rank, that is it can be approximated by a low-rank matrix $UV^H$ where $U, V\in \C^{n\times b}$ for an appropriate $b\ll n$, see e.g. \cite[Section~2.7]{grasedyck2013literature}.

Figure~\ref{plot:diffusion} shows the behavior of the normalized residual $R_k/\norm{UV^H}_F$, for the solution of the Poisson equation with $f(x,y)=1/(1+x+y)$ with the two proposed choices of poles. In this case, the matrix $F$ has numerical rank 8. We also compared the results with the 
Extended Krylov proposed in \cite{simoncini2007new}, which is a block rational
 Krylov method that alternates a pole equal to zero and a pole equal to infinity. 
 We remark that the iterations of the extended Krylov method are usually faster than a generical block rational Krylov method since in the iterations associated with poles equal to infinity the linear systems are replaced by matrix products and the iterations associated with poles equal to zero are improved using a factorization of the matrix. Table~\ref{table:diffusion} contains times and number of iterations required to reach a relative norm of the residual less than $10^{-8}$ for the solution of discretized Poisson equation with block rational Krylov methods with different choices of poles.

\begin{figure}
	\makebox[\linewidth][c]{
		\begin{tikzpicture}
			\begin{semilogyaxis}[
				title = {},  
				xlabel = {Iteration},
				ylabel = {Normalized residual norm},
				x tick label style={/pgf/number format/.cd,%
					scaled x ticks = false,
					set thousands separator={},
					fixed},
				legend pos=north east,
				ymajorgrids=true,
                height=.45\textwidth,
				grid style=dashed]
		
				\addplot[restrict x to domain=0:21,color=blue, mark=*, mark size=2pt] table[y index = 2] {example1_ADM.dat};
				\addlegendentry{{\tt{ADM}}}

				\addplot[restrict x to domain=0:21, color=red, mark=square*, mark size=2pt] table[y index = 2] {example1_sADM.dat};
				\addlegendentry{{\tt{sADM}}}
                \addplot[restrict x to domain=0:21,color=black, mark=triangle*, mark size=2pt] table[y index = 2] {example1_ext.dat};
				\addlegendentry{{\tt{ext}}}
			\end{semilogyaxis}
	\end{tikzpicture}}
\caption{Behavior of the residual produced by solving the Poisson equation with block rational Krylov methods, with different choices of poles. }\label{plot:diffusion}
\end{figure}
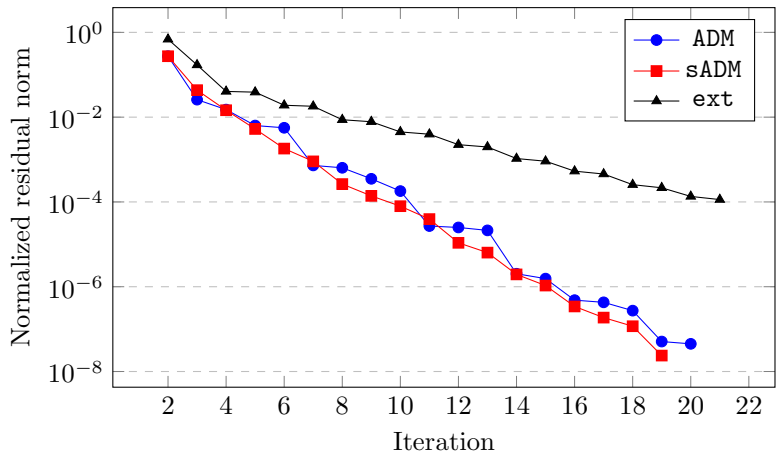

\begin{table}
    \centering
        \begin{tabular}{|c|ccc|}
            \hline
            poles &iter & residual & time (s) \\
            \hline
            ADM & $21$ & $8.82e-09$ & $0.92$ \\
            sADM & $20$ & $9.19e-09$ & $1.10$ \\
            ext & $53$ & $9.30e-09$ & $5.91$ \\
            \hline
        \end{tabular}
        \caption{Iterations and time needed to reach a relative norm of the residual less than $10^{-8}$ for the solution of discretized Poisson equation with block rational Krylov methods with different choices of poles.}\label{table:diffusion}
    \end{table}

The second experiment is the computation of an approximate solution for the convection-diffusion partial differential equation
\begin{equation*}
    \begin{cases}
        -\epsilon\Delta u +\vec w \cdot \nabla u = f &\text{ in }\Omega\\
        u\equiv 0 &\text{ on }\partial\Omega
    \end{cases}, \qquad \Omega= [0, 1]^2,
\end{equation*}
where $\epsilon\in \R_+$ is the viscosity parameter and $\vec w$ is the convection vector. Assuming $\vec w=(\Phi(x), \Psi(y))$, and discretizing the domain with a uniformly spaced grid as before, we obtain the Sylvester equation

\begin{equation*}
    (\epsilon A+\vec{\Phi}B)X+X(\epsilon A+B^H\vec{\Psi})=F
\end{equation*}
where $A$ and $F$  are defined as in the first experiment,
\begin{equation*}
    \vec {\Phi}=\begin{bmatrix}
        \Phi(h)\\
        &\Phi(2h)\\
        &&\ddots\\
        &&&\Phi((n-2)h)
    \end{bmatrix},\qquad
    \vec {\Psi}=\begin{bmatrix}
        \Psi(h)\\
        &\Psi(2h)\\
        &&\ddots\\
        &&&\Psi((n-2)h)
    \end{bmatrix}
\end{equation*}
 and 
\begin{equation*}
    B=\frac{1}{2h}\begin{bmatrix}
        0&1\\
        -1&\ddots&\ddots\\
        &\ddots&\ddots&1\\
        &&-1&0
    \end{bmatrix}
\end{equation*}
is the discretization by centered finite differences of the first order derivative in each direction.

\begin{figure}
	\makebox[\linewidth][c]{
		\begin{tikzpicture}
			\begin{semilogyaxis}[
				title = {},  
				xlabel = {Iteration},
				ylabel = {Normalized residual norm},
				x tick label style={/pgf/number format/.cd,%
					scaled x ticks = false,
					set thousands separator={},
					fixed},
				legend pos=north east,
				ymajorgrids=true,
                height=.45\textwidth,
				grid style=dashed]
		
				\addplot[restrict x to domain=0:30, color=blue, mark=*, mark size=2pt] table[y index = 2] {example2_ADM.dat};
				\addlegendentry{{\tt{ADM}}}

				\addplot[restrict x to domain=0:30, color=red, mark=square*, mark size=2pt] table[y index = 2] {example2_sADM.dat};
				\addlegendentry{{\tt{sADM}}}
                \addplot[restrict x to domain=0:30, color=black, mark=triangle*, mark size=2pt] table[y index = 2] {example2_ext.dat};
				\addlegendentry{{\tt{ext}}}
			\end{semilogyaxis}
	\end{tikzpicture}}
\caption{Behavior of the residual produced by solving the convection-diffusion equation with block rational Krylov methods, with different choices of poles. }\label{plot:convection}
\end{figure}
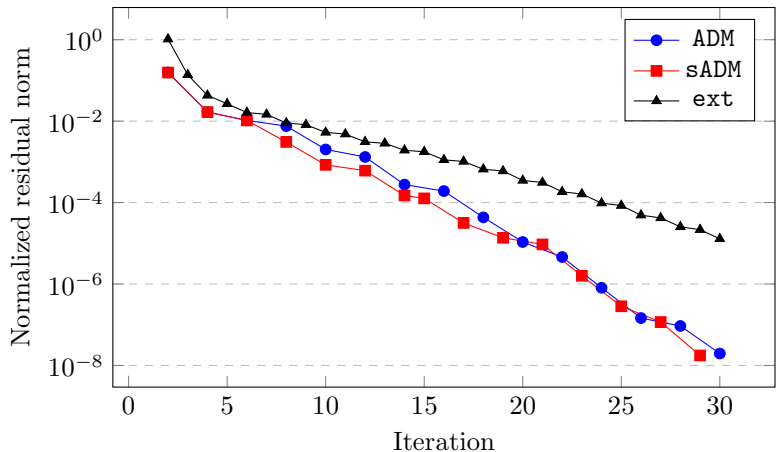

\begin{table}
    \centering
        \begin{tabular}{|c|ccc|}
            \hline
            poles &iter & residual & time (s) \\
            \hline
            ADM & $32$ & $2.18e-09$ & $2.12$ \\
            sADM & $31$ & $9.38e-09$ & $2.05$ \\
            ext & $54$ & $7.55e-09$ & $7.42$ \\
            \hline
        \end{tabular}
        \caption{Iterations and time needed to reach a relative norm of the residual less than $10^{-8}$ for the solution of discretized convection-diffusion equation with block rational Krylov methods with different choices of poles.}\label{table:convection}
    \end{table}

Figure~\ref{plot:convection} shows the behavior of the normalized residual for the solution of the convection-diffusion equation with $\epsilon=0.0083$, $f(x,y)=1/(1+x+y),$ $\vec w= (1 + \frac{(x + 1)^2}{4},\frac{1}{2}y)$ with the two proposed choices of poles and the extended Krylov method. Table~\ref{table:convection} contains times and number of iterations required to reach a relative norm of the residual less than $10^{-8}$ for the solution of discretized confection-diffusion equation with block rational Krylov methods with different choices of poles.

\section{Conclusions}

In this work we have proposed a method for solving low-rank Sylvester equations 
by means of projection onto block rational Krylov subspaces. The key 
advantage of the method with respect to state-of-the-art techniques is 
the possibility to exploit the reordering of poles to maintain the ``last''
pole of the space equal to $\infty$. This choice makes the residual of the 
large-scale equation easily computable in the projected one, without 
the need to artificially increasing the size of the subspace by 
introducing unnecessary 
poles at infinity. 

We have also reconsidered the convergence analysis for Krylov solvers 
for Sylvester equations of \cite{beckermann2011error}, extending it to 
block rational Krylov subspaces by means of the theoretical tools used 
in \cite{lund2018new} for the polynomial case. The analysis allows 
to design new strategies for adaptive pole selection, obtained by minimizing
the norm of a small $b \times b$ rational matrix, where $b$
is the block size. The minimization problem can be made simpler by replacing 
the norm with a surrogate function that is easier to evaluate.
In \cite{druskin2011adaptive} the authors 
propose a heuristic for the pole selection in block rational Krylov method, 
based on their analysis of the non-block case. Choosing the determinant 
as surrogate function yields exactly this heuristic, and it gives a solid 
theoretical justification to this approach.
Other choices, instead, yield 
completely novel strategies. One of these, called 
\texttt{sADM} in the paper, has 
comparable or better performances than the state of the art on the considered
examples. 

We expect that the results in this work will help to devise other 
pole selection strategies and convergence analysis in rational 
block Krylov methods. This will be subject to future research. 

The resulting algorithm is a 
robust solver for Sylvester equations, and the code has been made 
freely available at 
\texttt{https://github.com/numpi/rk\_adaptive\_sylvester}.

\bibliographystyle{plain}
    \bibliography{biblio_swap}

\end{document}